\newtheorem{theorem}{Theorem}[section]
 \newtheorem{lemma}[theorem]{Lemma}
\newtheorem{remark}[theorem]{Remark}
\numberwithin{equation}{section}
\newcommand{\eps}{\varepsilon}
\newcommand{\sgn}{\text{sgn}}
\newcommand{\R}{\mathbb{R}}
\begin{document}


%
%


\title{Multiscale Analysis of a Kinetic Model of \\ Confined Suspensions of Self-Propelled Rods}

\author{Leonid Berlyand}

\address{Department of Mathematics, The Pennsylvania State University,\\
University Park, Pennsylvania, 16802, USA\\
lvb2@psu.edu}

\author{Spencer Dang}

\address{Department of Mathematics, The Pennsylvania State University,\\
University Park, Pennsylvania, 16802, USA\\
sbd5473@psu.edu}

\author{Pierre-Emmanuel Jabin}

\address{Department of Mathematics, The Pennsylvania State University,\\
University Park, Pennsylvania, 16802, USA\\
pejabin@psu.edu}

\author{Mykhailo Potomkin}
\address{%
 Department of Mathematics, University of California, Riverside,\\
 900 University Ave.,
Riverside, California, 92521, USA\\
mykhailp@ucr.edu
}%
\maketitle


\begin{abstract}

The behavior of active matter under confinement poses significant challenges due to the intricate coupling between dynamics near boundaries and those in the bulk. A defining feature of active matter systems is that a substantial portion of their dynamics takes place near confining boundaries. In our previous work, we developed a kinetic framework that enables direct computation of the probability distribution functions for both the position and orientation of active rods. A distinguishing aspect of this approach is its explicit treatment of wall accumulation through the use of two coupled probability distribution functions: one describing the bulk population and the other representing rods accumulated at the boundary. Another novel feature is the structure of the governing equation, which is degenerate: it is second-order in one non-temporal variable and first-order in another.
The main focus of this paper is to rigorously justify this model via multi-scale analysis. We first establish well-posedness of the system and then employ two distinct multi-scale derivations to obtain the model as a singular limit of a more classical kinetic system in the regime of vanishing translational diffusion. For analytical clarity, we consider the case in which active rods, once accumulated at the wall, remain permanently confined there.
This work provides a rigorous mathematical foundation for reduced kinetic models of confined active matter, bridging microscopic dynamics and macroscopic accumulation phenomena.
     
\end{abstract}

\bigskip 

{{\bf Keywords}: Multi-scale analysis; Confined active particles; Kinetic Theory.}

{\bf AMS Subject Classification:} 35Q92, 35B25, 82C70






\date{}



\maketitle


\bibliographystyle{unsrt}

\section{Introduction} 

The concept of a self-propelled rod, also called an active rod, was successfully applied to study dynamics of a wide variety of microswimmers, ranging from motile rod-shaped bacteria \cite{SaiShe2008,peruani2006nonequilibrium,Elg2009,Bar2020} to chemically-driven bimetallic microrods \cite{rheotaxis2018baker,rubio2021}. Swimming of microorganisms typically occurs in confined environments and a common feature of rod-shaped microswimmer dynamics is accumulation at the confinements~\cite{rothschild1963non,Bia2017,BerLau2008,LiTan2009,Fig2020} leading to microswimmer's distributions with large values at walls. Unlike sedimentation of passive particles, accumulated active microswimmers continue to swim along the confining wall. The overall transport properties of dilute active microswimmers through microfluidic channels or microscale biological conduits are typically governed by the dynamics of the accumulated population~\cite{fily2014dynamics,fily2015dynamics,ezhilan2015transport,brown2024boundary}. Therefore, an accurate model that captures both microswimmer accumulation and the behavior of the accumulated population is essential for various biological and biomimetic applications.

An active rod is a thin segment with a front and rear, capable of autonomous motion toward its front. A basic model for an individual active rod swimming in a planar domain $\Omega\subset \mathbb R^2$ is governed by equations for the rod's center location $\boldsymbol{r}(t)$ and orientation angle $\varphi(t)$, defined as the counterclockwise angle between a line parallel to the rod's orientation and $x$-axis.  
If expressions for its translational velocity $\boldsymbol{V}(\boldsymbol{r},\boldsymbol{p})$ and angular velocity $\boldsymbol{\Phi}(\boldsymbol{r},\boldsymbol{p})$ are given, then the stochastic differential equations governing the active rod's dynamics are: 
\begin{equation}
    \begin{cases}
        \text{d}\boldsymbol{r}=\boldsymbol{V}(\boldsymbol{r},\boldsymbol{\varphi})\text{d}t+\sqrt{2D_{\text{tr}}}\,\text{d}W_{\boldsymbol{r}},\\
    \text{d}{\boldsymbol{\varphi}}=\Phi(\boldsymbol{r},\boldsymbol{\varphi})\text{d}t+\sqrt{2D_{\text{rot}}}\,\text{d}W_{\boldsymbol{\varphi}}.
    \end{cases}
    \label{eqn:original-sde}
\end{equation}
Here, $D_{\text{tr}}$ and $D_{\text{rot}}$ are translational and orientational diffusion coefficients, respectively, and $W_{\boldsymbol{r}}, W_{\boldsymbol{\varphi}}$ are Wiener processes. An 
example for $\boldsymbol{V}$ and $\Phi$ is\cite{potomkin2017,brown2024boundary} 
\begin{equation}
\boldsymbol{V}=V_{\text{prop}}\boldsymbol{p}+\boldsymbol{u}_{BG}+\boldsymbol{F}_{\text{ext}} \text{ and } \Phi = [\text{I}-\boldsymbol{p}\boldsymbol{p}^{\text{T}}]\nabla \boldsymbol{u}_{\text{BG}} \boldsymbol{p}\cdot \boldsymbol{e}_\varphi. 
\label{eqn:an-example}
\end{equation}
Here, $\boldsymbol{p}=(\cos(\varphi),\sin(\varphi))^{\text{T}}$ is the rod's orientation vector, $V_{\text{prop}}$ is the self-propulsion speed, $\boldsymbol{u}_{\text{BG}}$ is the background flow, and $\boldsymbol{F}_{\text{ext}}$ stands for an external force. If no external force is applied, $\boldsymbol{F}_{\text{ext}}=\boldsymbol{0}$, then the velocity of the active rod relative to the background flow is the propulsion velocity: $\boldsymbol{V}-\boldsymbol{u}_{BG} = V_{\text{prop}}\boldsymbol{p}$.  

Due to randomness coming from translational and rotational diffusion as well as uncertainty in initial data, it is convenient to translate the system \eqref{eqn:original-sde} into the Fokker-Planck equation for the probability distribution function $f(t,\boldsymbol{r},\varphi)$: 
\begin{equation}
\partial_t f + \partial_\varphi(\Phi f) - D_{\text{rot}}\partial_\varphi^2 f + \nabla_{\boldsymbol{r}}\cdot (\boldsymbol{V} f) - D_{\text{tr}} \Delta_{\boldsymbol{r}} f =0.\label{eq:unscaled_main_fp}
\end{equation}
This equation for the distribution of the locations and orientations of rod-like particles resembles the Smoluchowski equation~\cite{doi1981molecular,doi1988theory} for passive rods and has been used to analyze both dilute and semi-dilute active suspensions\cite{SaiShe2008,saintillan2008instabilities,haines2009three,saintillan2013active,ryan2013kinetic,ryan2013correlation,potomkin2016effective}.  

Translational diffusion in microswimmer's dynamics is typically negligible. To illustrate this using the Fokker-Planck equation \eqref{eq:unscaled_main_fp}, introduce non-dimensional variables $t=\hat{t}T$ and $\boldsymbol{r} = \hat{\boldsymbol{r}}L$ where $T = D_{\text{rot}}^{-1}$ and $L$ is the length associated with the domain of active rod's swimming $\Omega$. For example, if $\Omega$ is bounded, then $L =\text{diam}(\Omega)$, and if $\Omega$ is a slab of thickness $H$, $\Omega = \{(x,y)|0<y<H\}$, then $L=H$. After rescaling, we get the non-dimensional Fokker-Planck equation:
\begin{equation}
\partial_{\hat{t}} f_\varepsilon + \partial_\varphi(\hat{\Phi} f_\varepsilon) - \partial_\varphi^2 f_\varepsilon + \nabla_{\hat{\boldsymbol{r}}}\cdot (\hat{\boldsymbol{V}} f_\varepsilon) - \varepsilon\Delta_{\hat{\boldsymbol{r}}} f_\varepsilon =0.\label{eqn:rescaled_main_fp}
\end{equation}
Here, $\hat{\Phi}=\Phi D^{-1}_{\text{rot}}$, $\hat{\boldsymbol{V}}=\boldsymbol{V}/(L D_{\text{rot}})$, and 
\begin{equation}\label{eq:ratio}
\varepsilon=\dfrac{D_{\text{tr}}}{D_{\text{rot}}L^2}\textcolor{black}{=\underbrace{\dfrac{D_{\text{tr}}}{D_{\text{rot}}r^2}}_{\sim 1}\cdot\underbrace{\dfrac{r^2}{L^2}}_{\ll 1}}.
\end{equation}
Here, $r$ is the linear size of the microswimmer. For various types of microswimmers, the coefficient $\varepsilon$ is small, as indicated in \eqref{eq:ratio}, and thus the translational diffusion term is typically neglected in the modeling of active rod trajectories by the system \eqref{eqn:original-sde}, {\it e.g.} in Refs.~\cite{potomkin2017,rubio2021}. For example, for a flagellated bacterium, values of translation diffusion may attain $D_{\text{tr}}=2 \times 10^{3}\mu \text{m}^2 \cdot \text{s}^{-1}$ \cite{berg1993} whereas a representative value for the rotational diffusion coefficient is $D_{\text{rot}}=3.0 \,\text{rad}^2\cdot \text{s}^{-1}$ \cite{perron2012}.  For these values and $L = 1 \,\text{mm}$, the geometric scale is  $\varepsilon=6.6\times 10^{-4}\ll 1$.  

{This work focuses on the limit $\varepsilon \to 0$  in the equation \eqref{eqn:rescaled_main_fp}.
Equation~\eqref{eqn:rescaled_main_fp} is subject to periodic boundary condition in $\varphi$ and the no-flux (or Robin) boundary condition at the boundary }$\partial\Omega$: 
\begin{equation} 
(\hat{\boldsymbol{V}}f_\varepsilon-\varepsilon \nabla_{\hat{\boldsymbol{r}}}f_\varepsilon)\cdot \boldsymbol{n}=0, \quad \boldsymbol{r}\in\partial \Omega,\,-\pi<\varphi\leq \pi. \label{eqn:bc}
\end{equation}
The long-time behavior of individual active Brownian particle transport in confined flows based on the system of the kinetic equation~\eqref{eqn:rescaled_main_fp} with the boundary conditions~\eqref{eqn:bc} or their variations has been investigated, both analytically and numerically, in a number of studies~\cite{ezhilan2015transport,perthame2020,jiang2019dispersion,jiang2021transient,PerFu2023}. It was proven in Ref.~\cite{berlyand2020kinetic} that the boundary-value problem (\ref{eqn:rescaled_main_fp}, \ref{eqn:bc}) can be rigorously obtained in the small inertia limit for a wide range of rules describing collisions of active rods with the wall. 

{The boundary condition \eqref{eqn:bc} makes the limiting distribution $f$ of the distribution function $f_\varepsilon$ singular in the limit $\varepsilon\to 0$.} For small $\varepsilon$, a boundary layer is formed along $\partial\Omega$, which is in agreement with the wall accumulation observed in numerical simulations for system \eqref{eqn:original-sde} (see, e.g. Refs.~\cite{potomkin2017,rheotaxis2018baker}) and experiments with biological microswimmers \cite{rothschild1963non,Bia2017,BerLau2008,LiTan2009}. 

It was conjectured in Ref.~\cite{berlyand2020kinetic}, based on the formal multiscale asymptotic expansions, that the probability distribution function admits the following representation in the limit
$\varepsilon\to 0$: 
\begin{equation}
f\to \rho_{\text{bulk}} + \delta_{\partial\Omega}(\hat{\boldsymbol{r}}) \rho_{\text{wall}},\label{lim-representation}
\end{equation}
where $\rho_{\text{bulk}}$ and $\rho_{\text{wall}}$ are distributions of rods that swim freely in the bulk and rods that are accumulated on the wall $\partial \Omega$, respectively. The term $\delta_{\partial \Omega}$ denotes the $\delta$-function along $\partial \Omega$. The distributions $\rho_{\text{bulk}}$ and $\rho_{\text{wall}}$ satisfy a coupled system of Fokker-Planck equations. The main advantage of the system for $\rho_{\text{bulk}}$ and $\rho_{\text{wall}}$ is that it explicitly distinguishes accumulated rods and it does not have large variations of density values near the wall. A similar model of coupled in-bulk and wall distributions for active Brownian particles without background flow was formulated in Ref.~\cite{perthame2020} and their large-time behavior was analyzed.

The primary contribution of this work is the rigorous justification of the coupled bulk-boundary dynamics in a kinetic model of active rods in the singular limit $\eps \to 0$.  The key challenge in proving convergence is the singular limit; naive asymptotic expansions of bulk-boundary dynamics leads to an ill-posed system.  Instead, a two-scale matched asymptotic expansion was used along with a matching condition which captures the accumulation of rods on the wall (e.g., bordertaxis) to rigorously derive the limiting system~\eqref{eq:main_degenerate}. 

To this end, we start with the classical matching asymptotic expansion method~\cite{van1975perturbation,holmes2012introduction,nayfeh2024perturbation} resulting in the formal series accounting for the at-wall and away-from-wall (so called inner and outer solutions, respectively, with an appropriate matching condition). From the composite expansion $\hat f_\eps$,  rigorous $L^1$ weak convergence $\hat f_\eps \to  f = \rho_{\text{bulk}} + \delta_{\partial\Omega}(\hat{\boldsymbol{r}}) \rho_{\text{wall}}$ is proven. 
Our result requires establishing the well-posedness of the limiting system. The main challenge lies in the fact that the limiting system is degenerate~\cite{dibenedetto2012degenerate}: it takes the form
$
\partial_t \mathcal{U} + \mathcal{A} \mathcal{U}=0,
$
where the differential operator $\mathcal{A}$ is of second order with respect to the orientational variable and first order with respect to the spatial variable. This mixed structure complicates the application of standard powerful tools, such as semigroup theory.
To address this issue, we approximate the operator $\mathcal{A}$ with a non-degenerate version and derive the necessary \textit{a priori} bounds to establish the well-posedness of the limiting problem.

Finally, we consider an alternative method for deriving the limiting system based on an orthogonal decomposition ansatz for the pre-limiting solution. The key idea is to represent the solution as the sum of two orthogonal components: a boundary-layer part and a bulk distribution part, with the latter being orthogonal to boundary-layer-like functions.
As a result, we are able to prove weak $L^2$-convergence (noting that the spatial domain is unbounded) of this decomposition toward a solution of the form \eqref{lim-representation}
in the case $D_{\mathrm{rot}} = 0$.

In this work, we restrict our analysis to the case in which active rods are unable to escape from the wall. Mathematically, this corresponds to the condition $\boldsymbol{V} \cdot \boldsymbol{n} \geq g > 0$ for some constant $g > 0$. For instance, this condition is satisfied when an external force field $\boldsymbol{F}_{\text{ext}}$, directed toward the wall and exceeding the propulsion force, is applied, while the background flow $\boldsymbol{u}_{\text{BG}}$ in~\eqref{eqn:an-example} vanishes at the wall due to the no-slip boundary condition.
Relaxing this constraint in the proof of~\eqref{lim-representation} is challenging, as allowing active rods to escape introduces new boundary layers associated with escape orientations. 

Rigorous studies of singular limits for kinetic equations in bounded domains were discussed for the hydrodynamic limit from the Boltzmann equation to fluid models in Refs.~\cite{saint2009hydrodynamic,golse2004navier}. As pointed out in Ref.~\cite{berlyand2020kinetic}, in the system~\eqref{eqn:rescaled_main_fp}-\eqref{eqn:bc}, where the active rod has the ability to escape, boundary layers arise on a nontrivial scale distinct from~$\varepsilon^1$, analogous to the corner and parabolic layers characteristic of small-diffusion problems involving both advection and boundaries~\cite{shih1987asymptotic,van2006elliptic,gie2013analysis,gie2018singular}.
  The other important distinguishing feature of the system~\eqref{eqn:rescaled_main_fp}-\eqref{eqn:bc} is the anisotropic Laplacian leading to degenerate problem in the limit $\varepsilon\to 0$.
  The development of efficient numerical schemes for anisotropic Laplacians have been studied, e.g.,  in Refs.~\cite{degond2012asymptotic,degond2012duality,tang2017asymptotic} and the rigorous asymptotic analysis (without advection) in Ref.~\cite{ling2018asymptotic}. However, to the best of our knowledge, such singular limits in the presence of advection and non-penetrable boundaries, resulting in a coupled bulk-boundary limit system of the type obtained here, have not been addressed before. 
The limiting problem and main results are formulated in Section~\ref{sec:main-results}. Section~\ref{sec:well-posed} contains the proof of well-posedness of the limiting system.  Section~\ref{sec:l1} derives the limiting system and rigorously proves the $L^1$-convergence of the composite expansion to the solution of the limiting problem. Section~\ref{sec:l2} proves weak $L^2$-convergence in the particular case with no rotational diffusion. 

\section{Main results} \label{sec:main-results}
We consider the Fokker-Planck equation in half-space $\R^2_+=\left\{(x,y):y>0\right\}$. We impose the following simplifying assumptions on the advection $\boldsymbol{V}$ and reorientation $\Phi$ terms as follows: 
\begin{itemize}
\item[{({\bf  A1})}] $\boldsymbol{V}$ depends on orientation angle $\varphi$ only: $\boldsymbol{V}=V(\varphi)\boldsymbol{e}_y$, where $V\in C^1[0,2\pi]$  is a periodic function and it satisfies
          \begin{equation}\label{assumption-on-V}
              V(\varphi)\ge g>0, \quad\varphi\in [0,2\pi)
          \end{equation}
         for some positive constant $g>0$.  
\item[{({\bf A2})}] $\Phi=\Phi(\varphi)$ and $\Phi\in C^1_{\text{per}}[0,2\pi]$ is periodic.
\end{itemize}
For clarity of presentation, we assume that $\boldsymbol{V}$ and $\Phi$ are independent of the spatial variables $x$ and $y$. We note that our results can be extended to the case where $\boldsymbol{V}$ and $\Phi$ vary smoothly in space. 
Note also that assumptions (A1) and (A2) are satisfied, for example, in the case of~\eqref{eqn:an-example} with the shear flow $\boldsymbol{u}_{\text{BG}} = \gamma y\, \boldsymbol{e}_x$ and the wall-attracting external force $\boldsymbol{F}_{\mathrm{ext}} = -g\, \boldsymbol{e}_y$, where $\gamma > 0$ and $g > 1$ are given constants. In this case, we have
$
V(\varphi) = V_{\text{prop}} \sin(\varphi) - g$ and $\Phi(\varphi) = -\gamma \sin^2(\varphi),
$
which satisfy conditions (A1) and (A2). This case is also relevant when the problem in $\mathbb{R}^2_+$ is viewed as an approximation of the vicinity of a point on the wall, where the linear size of the region is much larger than the length of an active rod. In such a setting, it is reasonable to treat the wall as flat and to approximate the background Poiseuille flow by its linearization, which corresponds to a shear flow.

 Under this assumption, we can exclude dependence on variable $x$ from the unknown probability distribution function $f_\varepsilon$. Then the problem (\ref{eqn:rescaled_main_fp},\ref{eqn:bc}) reduces to~
\begin{equation}
\left\{
\begin{array}{l}\partial_t f_\varepsilon + \partial_\varphi(\Phi f_\varepsilon) - D\partial_\varphi^2 f_\varepsilon -V\partial_y f_\varepsilon-\varepsilon \partial_y^2 f_\varepsilon = 0, \\
-Vf_\varepsilon-\varepsilon \partial_y f|_{y=0} = 0.
\end{array}
\right.\label{eq:main_fp}
\end{equation}

It was shown in Ref.~\cite{berlyand2020kinetic} by means of formal multi-scale asymptotic expansion that the solution $f_\varepsilon$ of system \eqref{eq:main_fp} converges to $\rho_{\text{bulk}}+\delta_{\partial \mathbb R^2_+}\rho_{\text{wall}}$ in the limit $\varepsilon \to 0$, and functions $\rho_{\text{bulk}}(t,y,\varphi)$ and $\rho_{\text{wall}}(t,\varphi)$ solve the following equations:  
\begin{equation}
\left\{
\begin{array}{l}
\partial_t \rho_{\text{bulk}}+\partial_\varphi\left(\Phi \rho_{\text{bulk}}\right)-D\partial_\varphi^2 \rho_{\text{bulk}} -V\partial_y \rho_{\text{bulk}} = 0, \\
\partial_t \rho_{\text{wall}}+\partial_\varphi\left(\Phi \rho_{\text{wall}}\right)-D\partial_\varphi^2 \rho_{\text{wall}} =V\rho_{\text{bulk}}|_{y=0}. 
\end{array}
\right.
\label{eq:main_degenerate}
\end{equation}
The system is subject to initial conditions: 
\begin{equation*}
\rho_{\text{bulk}}|_{t=0}=\rho_{\text{bulk}}^{(0)} \text{ and }
\rho_{\text{wall}}|_{t=0}=\rho_{\text{wall}}^{(0)}.
\end{equation*}

Unlike the original initial-boundary value problem~\eqref{eq:main_fp}, which is a parabolic equation with no-flux boundary conditions, the system~\eqref{eq:main_degenerate} is non-classical in several important respects. 
First, it is \emph{degenerate} in the sense that it lacks a second derivative in the variable $y$. In other words, if we treat $y$ and $\varphi$ as spatial variables, the equation resembles a parabolic one, but with an incomplete Laplacian operator.
Second, rather than specifying boundary conditions for the bulk distribution $\rho_{\text{bulk}}$, the model incorporates a separate Fokker-Planck equation defined on the boundary of the domain.
In Section~\ref{sec:well_posedness}, we establish the following well-posedness result for the limiting system~\eqref{eq:main_degenerate}.

\begin{theorem}[Well-posedness \& regularity of system \eqref{eq:main_degenerate}]
Let assumptions (A1) \& (A2) hold. Assume also that $\rho_{\mathrm{bulk}}^{(0)}\in H^2_\varphi H^1_y$ and $\rho_{\mathrm{wall}}^{(0)}\in H^1_\varphi$. Then there exists a unique solution of the system \eqref{eq:main_degenerate} such that $\rho_{\mathrm{bulk}}\in C([0,T];H^2_\varphi H^1_y)$ and $\rho_{\mathrm{wall}}\in C([0,T];H^2_\varphi)$. Here, $H^2_\varphi=H^2_{\text{per}}(0,2\pi)$ and $ H^2_\varphi H^1_y$ is the space of functions $v(y,\varphi)$, $2\pi$-periodic in $\varphi$, such that 
\begin{equation*}
     \int\limits_0^{2\pi} \int\limits_0^{\infty} |D^2_\varphi  v|^2 + |D_\varphi v|^2 + |v|^2 + |D_y v|^2 \,\mathrm{d}y\mathrm{d}\varphi < \infty.
\end{equation*}
Moreover, for every integer $k\geq 0$, if $\rho_{\mathrm{bulk}}^{(0)}\in H^{k+2}_\varphi H^{k+1}_y$ and $\rho_{\mathrm{wall}}^{(0)}\in H^{k+1}_\varphi$, then $\rho_{\mathrm{bulk}}\in C([0,T];H^{k+2}_\varphi H^{k+1}_y)$ and $\rho_{\mathrm{wall}}\in C([0,T];H^{k+2}_\varphi)$.

\label{thm:degenerate}
\end{theorem}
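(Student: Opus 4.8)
The plan is to prove existence, uniqueness, and regularity for the degenerate system \eqref{eq:main_degenerate} by a vanishing-viscosity (elliptic regularization) argument in the $y$ variable. The degeneracy — the absence of $\partial_y^2$ — is precisely what obstructs semigroup theory, so I would reintroduce a small parabolic term. Concretely, for $\delta>0$ consider the regularized bulk equation
\begin{equation*}
\partial_t \rho_{\mathrm{bulk}}^\delta + \partial_\varphi(\Phi\rho_{\mathrm{bulk}}^\delta) - D\partial_\varphi^2\rho_{\mathrm{bulk}}^\delta - V\partial_y\rho_{\mathrm{bulk}}^\delta - \delta\partial_y^2\rho_{\mathrm{bulk}}^\delta = 0
\end{equation*}
on $\mathbb{R}^2_+$, coupled to the same wall equation with source $V\rho_{\mathrm{bulk}}^\delta|_{y=0}$, and equip the regularized bulk equation with a boundary condition at $y=0$ compatible with the limit: the natural choice is the no-flux-type condition $-V\rho_{\mathrm{bulk}}^\delta - \delta\partial_y\rho_{\mathrm{bulk}}^\delta|_{y=0} = -V\rho_{\mathrm{bulk}}^\delta|_{y=0}$ rewritten so that the boundary flux feeding $\rho_{\mathrm{wall}}^\delta$ equals $V\rho_{\mathrm{bulk}}^\delta|_{y=0}$; i.e. impose $\delta\partial_y\rho_{\mathrm{bulk}}^\delta|_{y=0}=0$, a homogeneous Neumann condition, which keeps the total mass $\int\rho_{\mathrm{bulk}}^\delta + \int\rho_{\mathrm{wall}}^\delta$ conserved and decouples cleanly in the limit since $\delta\partial_y\rho^\delta\to 0$. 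For fixed $\delta$ this is a genuinely parabolic system (parabolic in $(y,\varphi)$ for the bulk, parabolic in $\varphi$ with a lower-order source for the wall), and existence/uniqueness in the stated energy spaces follows from standard Galerkin or analytic-semigroup methods; the nonlocal coupling through the trace $\rho_{\mathrm{bulk}}^\delta|_{y=0}$ is handled by a fixed-point or by treating the pair as a single linear evolution on $H^2_\varphi H^1_y \times H^1_\varphi$.

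The heart of the matter is the \emph{$\delta$-uniform a priori estimates}. I would test the bulk equation against $\rho_{\mathrm{bulk}}^\delta$, the wall equation against $\rho_{\mathrm{wall}}^\delta$, and add; the transport term $-V\partial_y\rho_{\mathrm{bulk}}^\delta$ contributes, after integration by parts, a boundary term $\tfrac12 V(\varphi)\,(\rho_{\mathrm{bulk}}^\delta|_{y=0})^2$ (using $V>0$ and that $\rho^\delta\to0$ as $y\to\infty$), the coupling source contributes $-\int V\rho_{\mathrm{bulk}}^\delta|_{y=0}\,\rho_{\mathrm{wall}}^\delta$ on the wall side and $+\int V (\rho_{\mathrm{bulk}}^\delta|_{y=0})^2$ type term; crucially these boundary contributions are of definite sign or absorbable via Young's inequality using $g\le V$, and the $-D\partial_\varphi^2$ terms give coercivity in $H^1_\varphi$. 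This yields an $L^2$-in-time, $H^1_\varphi$ bound and an $L^\infty$-in-time $L^2$ bound on $(\rho_{\mathrm{bulk}}^\delta,\rho_{\mathrm{wall}}^\delta)$, \emph{uniformly in $\delta$}, plus a bound $\sqrt\delta\,\partial_y\rho_{\mathrm{bulk}}^\delta \in L^2$. To recover the stated $H^2_\varphi H^1_y$ regularity I would then differentiate the equation: applying $\partial_\varphi$ and $\partial_\varphi^2$ and repeating the energy estimate controls $\rho_{\mathrm{bulk}}^\delta$ in $H^2_\varphi$ (the $\Phi$ terms produce only lower-order commutators since $\Phi\in C^1_{\mathrm{per}}$); and applying $\partial_y$ — note $\partial_y$ commutes with the whole bulk operator since all coefficients are $y$-independent, and the Neumann condition is preserved — gives control of $\partial_y\rho_{\mathrm{bulk}}^\delta$ in the same norms, uniformly in $\delta$. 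The higher-$k$ claim follows by iterating: differentiate $k$ more times in $\varphi$ and $y$; compatibility of initial data in $H^{k+2}_\varphi H^{k+1}_y$ is exactly what makes these estimates close.

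With uniform bounds in hand, I would extract a weakly-$*$ convergent subsequence $(\rho_{\mathrm{bulk}}^{\delta}, \rho_{\mathrm{wall}}^{\delta}) \rightharpoonup (\rho_{\mathrm{bulk}},\rho_{\mathrm{wall}})$, pass to the limit in the weak formulation — the term $\delta\partial_y^2\rho_{\mathrm{bulk}}^\delta$ vanishes because $\delta\|\partial_y\rho^\delta\|_{L^2}^2\le C$ forces $\delta\partial_y\rho^\delta\to 0$ in $L^2$, and the trace $\rho_{\mathrm{bulk}}^\delta|_{y=0}$ converges strongly enough (via an Aubin–Lions argument using $\partial_t$ bounds read off from the equation and the uniform $H^1_\varphi$, $H^1_y$ spatial bounds, which give compactness of traces on $y=0$) to identify the source term $V\rho_{\mathrm{bulk}}|_{y=0}$ in the wall equation. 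Uniqueness for the limiting system is then a direct energy estimate on the difference of two solutions: the same boundary-term computation as above shows the difference has nonincreasing $L^2$-norm, hence is zero; uniqueness also upgrades the weak convergence to convergence of the full family. Finally, time-continuity $\rho_{\mathrm{bulk}}\in C([0,T];H^2_\varphi H^1_y)$ and $\rho_{\mathrm{wall}}\in C([0,T];H^2_\varphi)$ follows from the standard argument combining weak continuity with the energy identity (norm continuity plus weak continuity implies strong continuity); note the wall distribution gains one extra $\varphi$-derivative relative to its data because its equation is parabolic in $\varphi$ with a source that lives in $H^1_\varphi$ by the bulk trace theorem, a parabolic smoothing effect.

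\textbf{The main obstacle} I anticipate is making the boundary terms in the a priori estimate genuinely $\delta$-uniform and correctly signed: the transport flux at $y=0$, the coupling source, and the regularization flux $\delta\partial_y\rho^\delta|_{y=0}$ must be arranged — via the right choice of regularized boundary condition — so that their sum is controlled by $g$ and the dissipation, without any constant blowing up as $\delta\to0$. A mismatched boundary condition would either destroy mass conservation or produce an uncontrolled boundary term; getting this bookkeeping right, together with ensuring the trace $\rho_{\mathrm{bulk}}^\delta|_{y=0}$ is compact (so the bilinear wall source passes to the limit), is the technical crux. A secondary difficulty is that $\mathbb{R}^2_+$ is unbounded in $y$, so one must justify the decay of $\rho^\delta$ and its derivatives at $y=\infty$ used in every integration by parts — this can be handled by working first on truncated domains $\{0<y<R\}$ with a cutoff and sending $R\to\infty$, or by building the decay into the function spaces via weights, but it adds a layer of routine approximation.
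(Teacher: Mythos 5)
Your proposal is correct in outline but follows a genuinely different route from the paper. The paper casts the system as an abstract evolution $\partial_t\mathcal{U}+\mathcal{L}\mathcal{U}=0$ with a lower-triangular operator matrix, proves that the degenerate bulk operator $B=\partial_\varphi(\Phi\,\cdot)-D\partial_\varphi^2+V\partial_y$ generates a $C_0$-semigroup via Lumer--Phillips, and then solves the wall equation by Duhamel's formula using the trace of the bulk solution as a source. The regularization $-\varepsilon\partial_y^2$ appears there too, but only at the level of the \emph{stationary resolvent equation} $(B+\lambda)u=f$: Lax--Milgram gives the regularized solution, uniform bounds on $u_\varepsilon$, $\partial_\varphi u_\varepsilon$, $\partial_y u_\varepsilon$, $\partial_\varphi^2 u_\varepsilon$ are extracted by testing with $u_\varepsilon$ and $\partial_y u_\varepsilon$ and by reading $\partial_\varphi^2 u_\varepsilon$ off the equation, and $\varepsilon\to 0$ yields maximality. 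You instead regularize the full time-dependent problem and close $\delta$-uniform energy estimates directly, finishing with weak compactness. Both exploit the same two structural facts --- the triangular coupling and the sign $V\ge g>0$, which makes the transport outgoing at $y=0$ so that the bulk equation needs no boundary condition there --- and both would work. The semigroup route buys a cleaner treatment of time-continuity and of the nonhomogeneous wall equation (no Aubin--Lions needed); your route is more self-contained and avoids verifying the range condition, at the price of managing boundary conditions and data compatibility for the regularized parabolic problem.

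Two points in your argument need repair, though neither is fatal. First, your claim that ``the Neumann condition is preserved'' under $\partial_y$ is wrong: if $\delta\partial_y\rho^\delta|_{y=0}=0$, then $w=\partial_y\rho^\delta$ satisfies a homogeneous \emph{Dirichlet} condition at $y=0$, not Neumann (and $\partial_y w|_{y=0}=\delta^{-1}(\partial_t\rho^\delta+\partial_\varphi(\Phi\rho^\delta)-D\partial_\varphi^2\rho^\delta)|_{y=0}\neq 0$ in general). The energy estimate for $w$ still closes precisely because $w|_{y=0}=0$ kills both boundary terms, but you must also address that the initial data need not satisfy $\partial_y\rho^{(0)}_{\mathrm{bulk}}|_{y=0}=0$, so the estimate has to be run in the weak/form-domain sense rather than assuming a classical compatible solution. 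Second, the wall source $V\rho^\delta_{\mathrm{bulk}}|_{y=0}$ is \emph{linear}, not bilinear, in the unknown, so weak $L^2$ convergence of the trace (which follows from your uniform $H^1_y$ bound and weak continuity of the trace map) already suffices to pass to the limit; the Aubin--Lions compactness step is unnecessary machinery.
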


Our main focus is justifying the limit of initial-boundary value problem \eqref{eq:main_fp} to system \eqref{eq:main_degenerate}. 

\begin{theorem}[$L^1$-approximation] 
Let assumptions (A1) \& (A2) hold. Let $f_\varepsilon$ and $(\rho_\mathrm{bulk},\rho_{\mathrm{wall}})$ be solutions of \eqref{eq:main_fp} and \eqref{eq:main_degenerate}, respectively. Assume also that $f_\varepsilon|_{t=0}=\rho_{\mathrm{bulk}}|_{t=0}$ is independent of $\varepsilon$ and $\rho_{\mathrm{wall}}|_{t=0} = 0$. Then define an approximation $\bar{f}_\varepsilon$ of $f_\varepsilon$ as 
\begin{equation}
\bar{f}_\varepsilon(t,y,\varphi):=\rho_{\mathrm{wall}}(t,\varphi)\cdot\dfrac{V(\varphi)}{\varepsilon} e^{-V(\varphi)y/\varepsilon}+\rho_{\mathrm{bulk}}(t,y,\varphi).\label{def-of-l1-approx}
\end{equation}
Then for all $t>0$ 
\begin{equation}
\lim\limits_{\varepsilon \to 0}\int\limits_{0}^{2\pi}\int\limits_{0}^{\infty}|f_\varepsilon(t,y,\varphi) - \bar{f}_\varepsilon(t,y,\varphi)|\,\mathrm{d}y\mathrm{d}\varphi = 0.\label{statement-of-l1-approx}
\end{equation}
\label{thm:l1convergence}
\end{theorem}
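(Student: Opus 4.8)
The plan is to set $b_\varepsilon(t,y,\varphi):=\rho_{\mathrm{wall}}(t,\varphi)\tfrac{V(\varphi)}{\varepsilon}e^{-V(\varphi)y/\varepsilon}$, so that $\bar f_\varepsilon=b_\varepsilon+\rho_{\mathrm{bulk}}$, and to study $g_\varepsilon:=f_\varepsilon-\bar f_\varepsilon$. Under the hypotheses $f_\varepsilon|_{t=0}=\rho_{\mathrm{bulk}}|_{t=0}$ and $\rho_{\mathrm{wall}}|_{t=0}=0$ we have $g_\varepsilon|_{t=0}=0$. Writing $L_\varepsilon:=\partial_t+\partial_\varphi(\Phi\,\cdot\,)-D\partial_\varphi^2-V\partial_y-\varepsilon\partial_y^2$ for the operator in \eqref{eq:main_fp}, $g_\varepsilon$ solves $L_\varepsilon g_\varepsilon=S_\varepsilon:=-L_\varepsilon\bar f_\varepsilon$ with the inhomogeneous flux condition $(Vg_\varepsilon+\varepsilon\partial_y g_\varepsilon)|_{y=0}=-B_\varepsilon$, where $B_\varepsilon:=(V\rho_{\mathrm{bulk}}+\varepsilon\partial_y\rho_{\mathrm{bulk}})|_{y=0}$. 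Both $S_\varepsilon$ and $B_\varepsilon$ are only $O(1)$ in $L^1$, so a naive $L^1$ estimate on $g_\varepsilon$ does not close; the point is to exploit a hidden mass-balance between them, reduce to the primitive $h_\varepsilon:=\int_y^\infty g_\varepsilon\,\mathrm{d}y'$, which will solve the \emph{same} parabolic problem but with a source that is small in $L^1$ and with homogeneous Dirichlet data, and then conclude by a parabolic smoothing estimate.

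The first key observation is structural. Since $V=V(\varphi)$ is independent of $y$, the profile $e^{-V(\varphi)y/\varepsilon}$ is annihilated by $-V\partial_y-\varepsilon\partial_y^2$; hence $L_\varepsilon b_\varepsilon$ contains only $\partial_t$- and $\partial_\varphi$-derivatives of $b_\varepsilon$, while $L_\varepsilon\rho_{\mathrm{bulk}}=-\varepsilon\partial_y^2\rho_{\mathrm{bulk}}$ by the bulk equation in \eqref{eq:main_degenerate}. Thus $S_\varepsilon=\varepsilon\partial_y^2\rho_{\mathrm{bulk}}-\big(\partial_t+\partial_\varphi(\Phi\,\cdot\,)-D\partial_\varphi^2\big)b_\varepsilon$. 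Because $\int_0^\infty b_\varepsilon\,\mathrm{d}y=\rho_{\mathrm{wall}}$ exactly, integrating this over $y>0$ and invoking the wall equation in \eqref{eq:main_degenerate} gives $\int_0^\infty S_\varepsilon\,\mathrm{d}y=-B_\varepsilon$. Moreover every term of $S_\varepsilon$ is a $y$-derivative of a primitive vanishing at $y=\infty$, so $S_\varepsilon=\partial_y\Sigma_\varepsilon$ with $\Sigma_\varepsilon|_{y=\infty}=0$ and $\Sigma_\varepsilon|_{y=0}=B_\varepsilon$, where $\Sigma_\varepsilon=\varepsilon\partial_y\rho_{\mathrm{bulk}}+(\text{terms of the form }(\text{smooth in }\varphi)\times(\text{polynomial in }y/\varepsilon)\times e^{-V(\varphi)y/\varepsilon})$. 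Using $\|\Sigma_\varepsilon\|_{L^1_{y,\varphi}}\le\varepsilon\|\partial_y\rho_{\mathrm{bulk}}\|_{L^1}+\|\,y\,S_\varepsilon^{\mathrm{bl}}\|_{L^1}$, the elementary bound $\int_0^\infty y(y/\varepsilon)^j e^{-V y/\varepsilon}\,\mathrm{d}y=O(\varepsilon)$, the regularity of $\rho_{\mathrm{bulk}},\rho_{\mathrm{wall}}$ from Theorem~\ref{thm:degenerate} (with suitably regular and $y$-decaying initial data and sufficiently smooth $V,\Phi$), one obtains $\sup_{[0,T]}\|\Sigma_\varepsilon(t)\|_{L^1_{y,\varphi}}\le C\varepsilon$.

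Now pass to $h_\varepsilon=\int_y^\infty g_\varepsilon\,\mathrm{d}y'$. A barrier (exponential supersolution) argument for \eqref{eq:main_fp}, uniform in $\varepsilon\in(0,1]$, shows that $f_\varepsilon$ and $\partial_y f_\varepsilon$ decay in $y$, so $h_\varepsilon$ is well defined and the integrations by parts below are justified. Integrating $L_\varepsilon g_\varepsilon=\partial_y\Sigma_\varepsilon$ over $(y,\infty)$, and using that $\Phi,D,V$ do not depend on $y$, yields $L_\varepsilon h_\varepsilon=-\Sigma_\varepsilon$ with $h_\varepsilon|_{t=0}=0$. Integrating $L_\varepsilon g_\varepsilon=S_\varepsilon$ over all of $(0,\infty)$, the $\partial_y$-terms produce exactly the boundary flux $-B_\varepsilon$, which cancels $\int_0^\infty S_\varepsilon\,\mathrm{d}y=-B_\varepsilon$, so $G_\varepsilon:=h_\varepsilon|_{y=0}=\int_0^\infty g_\varepsilon\,\mathrm{d}y$ solves the closed homogeneous equation $\partial_t G_\varepsilon+\partial_\varphi(\Phi G_\varepsilon)-D\partial_\varphi^2 G_\varepsilon=0$ with $G_\varepsilon|_{t=0}=0$; hence $G_\varepsilon\equiv0$. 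Therefore $h_\varepsilon$ solves $L_\varepsilon h_\varepsilon=-\Sigma_\varepsilon$ on $\{y>0\}$ with the homogeneous Dirichlet condition $h_\varepsilon|_{y=0}=0$ and zero initial data, while $g_\varepsilon=-\partial_y h_\varepsilon$.

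To finish, note that $L_\varepsilon$ with Dirichlet data generates an $L^1$-contraction (sub-Markov) solution operator $U_\varepsilon(\tau)$, and — since the problem is uniformly parabolic in $y$ with diffusivity $\varepsilon$, the $\varphi$-part is an $L^1$-contractive Fokker--Planck flow, and the drift $-V(\varphi)\partial_y$ is bounded — one has the smoothing estimate $\|\partial_y U_\varepsilon(\tau)\psi\|_{L^1_{y,\varphi}}\le C(\varepsilon\tau)^{-1/2}\|\psi\|_{L^1_{y,\varphi}}$ with $C$ independent of $\varepsilon\in(0,1]$ (conditioning on the $\varphi$-process reduces this to the one-dimensional Dirichlet heat kernel with bounded drift). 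By Duhamel, $h_\varepsilon(t)=-\int_0^t U_\varepsilon(t-s)\Sigma_\varepsilon(s)\,\mathrm{d}s$, and therefore
\[
\|f_\varepsilon(t)-\bar f_\varepsilon(t)\|_{L^1}=\|\partial_y h_\varepsilon(t)\|_{L^1}\le\int_0^t \frac{C}{\sqrt{\varepsilon(t-s)}}\,\|\Sigma_\varepsilon(s)\|_{L^1}\,\mathrm{d}s\le C'\,\varepsilon^{1/2}\,t^{1/2}\xrightarrow[\varepsilon\to0]{}0,
\]
which is \eqref{statement-of-l1-approx}. I expect the main obstacle to be precisely this final balance: the error source is only of size $\varepsilon$ in $L^1$ whereas extracting $\partial_y$ costs $\varepsilon^{-1/2}$, so one must establish the $\varepsilon$-uniform smoothing bound carefully in the presence of the transport term, the $\varphi$-coupling, and the half-line Dirichlet geometry (and secure, as a separate technical input, the $\varepsilon$-uniform $y$-decay of $f_\varepsilon$ used to define $h_\varepsilon$). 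The structural facts behind the reduction — that $e^{-Vy/\varepsilon}$ is annihilated by $-V\partial_y-\varepsilon\partial_y^2$ and that the $y$-mass of $b_\varepsilon$ is exactly $\rho_{\mathrm{wall}}$, so that the $y$-integrated error equation closes and forces $h_\varepsilon|_{y=0}\equiv0$ — are what make the problem amenable to this small-source Dirichlet argument.
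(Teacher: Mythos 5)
Your reduction is structurally sound and genuinely different from the paper's argument. You work directly with the error against the leading-order ansatz $\bar f_\varepsilon$, observe correctly that $e^{-Vy/\varepsilon}$ is annihilated by $-V\partial_y-\varepsilon\partial_y^2$, that $\int_0^\infty S_\varepsilon\,\mathrm{d}y=-B_\varepsilon$ by the wall equation, and hence that the $y$-antiderivative $h_\varepsilon$ solves the same operator equation with an $O(\varepsilon)$ source in $L^1$ and homogeneous Dirichlet data (the identity $G_\varepsilon\equiv 0$ checks out). This mass-balance is exactly the solvability/matching condition the paper exploits, but the paper uses it differently: it builds the next-order boundary-layer corrector $p_0$ (solving \eqref{eq:p0_bvp}, with the explicit polynomial structure \eqref{another-representation-for-p_0}) into a composite expansion $\hat f_\varepsilon$, so that the residuals $R_\varepsilon, r_\varepsilon$ in \eqref{def-of-residuals} are already $O(\varepsilon)$ in $L^1$ \emph{without} taking a primitive; then a plain $L^1$-stability (Gronwall) estimate via the regularized sign multiplier $J_\eta(s)=\sqrt{s^2+\eta^2}-\eta$ closes the proof, and the comparison $\|\hat f_\varepsilon-\bar f_\varepsilon\|_{L^1}=O(\varepsilon)$ is elementary. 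In short: the paper pays up front with an extra corrector and needs only $L^1$-contraction; you avoid the corrector but must then recover a derivative.

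That recovery is where your argument has a genuine gap. The entire proof hinges on the $\varepsilon$-uniform smoothing bound $\|\partial_y U_\varepsilon(\tau)\psi\|_{L^1}\le C(\varepsilon\tau)^{-1/2}\|\psi\|_{L^1}$, and there is zero margin: your source is exactly $O(\varepsilon)$ and the derivative costs exactly $(\varepsilon\tau)^{-1/2}$, so any loss (even logarithmic) destroys the conclusion. This estimate is not off-the-shelf. In parabolic scaling $y=\sqrt{\varepsilon}\eta$ the drift becomes $V(\varphi)/\sqrt{\varepsilon}\to\infty$, so generic Aronson-type gradient bounds carry constants like $e^{CV^2\tau/\varepsilon}$ and are useless here. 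For a drift that is \emph{constant} in both $y$ and $t$ one can verify the bound by an explicit image-kernel computation (the dangerous factor $e^{Vy'/\varepsilon}$ in the image term is exactly compensated), but once $V$ depends on $\varphi$ and $D>0$ mixes $\varphi$, the conditional $y$-process has a time-dependent drift $-V(\varphi_s)$ and no exact reflection formula; your parenthetical ``conditioning on the $\varphi$-process'' does not reduce to a solved problem. You would need to prove this kernel estimate for time-inhomogeneous, spatially constant drift pointing into the absorbing boundary, uniformly in $\varepsilon$ --- a nontrivial lemma that is the real content of the theorem in your formulation. (A secondary, fixable point: the $\varepsilon$-uniform $y$-decay of $f_\varepsilon$ needed to define $h_\varepsilon$, and $L^1_y$ rather than $L^2_y$ control of $\partial_y\rho_{\mathrm{bulk}}$, both need to be secured from the well-posedness theory.) If you want to keep your framework, the cleanest repair is to borrow the paper's idea at this one point: subtract the explicit first corrector $p_0(y/\varepsilon)-u_0(0)$ from $g_\varepsilon$ before estimating, after which no smoothing is needed and the $J_\eta$-multiplier $L^1$ estimate finishes the proof.
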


\begin{remark}
Note that the term $\dfrac{V(\varphi)}{\varepsilon} e^{-V(\varphi)y/\varepsilon}$ on the right-hand side of \eqref{def-of-l1-approx} serves as an approximation of the Dirac delta function. Taking this into account, one can observe the correspondence between the definition \eqref{def-of-l1-approx} and the limiting representation given in \eqref{lim-representation}.
\end{remark}

\begin{remark}
The assumption that $f_\varepsilon|_{t=0} = \rho_{\mathrm{bulk}}|_{t=0}$ is independent of $\varepsilon$, and that $\rho_{\mathrm{wall}}|_{t=0} = 0$, reflects a natural physical situation at the initial moment of time: the rods are distributed in the bulk and have not yet accumulated at the wall. However, this assumption can be significantly relaxed. In general, our proof shows that the equality in \eqref{statement-of-l1-approx} holds for all $t > 0$ provided it holds at $t = 0$.  
\end{remark}

Next, we turn to the case of the zero rotational diffusion $D=0$: 
\begin{equation}
\left\{
\begin{array}{l}\partial_t f_\varepsilon + \partial_\varphi(\Phi f_\varepsilon)  -V\partial_y f_\varepsilon-\varepsilon \partial_y^2 f_\varepsilon = 0, \\
-Vf_\varepsilon-\varepsilon \partial_y f|_{y=0} = 0.
\end{array}
\right.\label{eq:main_fp-noD}
\end{equation}
In this case, using a different approach from the one used in the proof of Theorem~\ref{thm:l1convergence} and a key ansatz for the solution $f_\varepsilon$, we proved weak $L^2$-convergence of $f_\varepsilon$ to a singular representation \eqref{lim-representation}. 

\begin{theorem}[weak $L^2$-convergence]
Let assumptions (A1) \& (A2) hold. Let $f_\varepsilon$ be a solution of \eqref{eq:main_fp-noD}. Introduce $m_\varepsilon$ and $u_\varepsilon$ such that 
\begin{equation}
    \label{eq:forthogonal}
     f_\varepsilon = \dfrac{1 }{\varepsilon} m_\varepsilon(t,\varphi) e^{-Vy/\varepsilon} + u_\varepsilon(t,\varphi, y) \quad \text{with} \quad \int\limits_0^\infty u_\varepsilon e^{-Vy/\varepsilon} \, \mathrm{d}y = 0.
\end{equation}
Then
        \begin{align*}
            m_\varepsilon &\rightharpoonup m \text{ in $L^2(0,2\pi)$}\\
            u_\varepsilon &\rightharpoonup u \text{ in $H^1(0,\infty) \times L^2(0,2\pi)$.}
        \end{align*}
        and $\rho_{\mathrm{wall}}:=m/V$ and $\rho_{\mathrm{bulk}}:=u$ satisfy the limiting equations:
        \begin{equation}
\begin{cases}
    \partial_t \rho_{\mathrm{wall}}  + \partial_\varphi(\Phi \rho_{\mathrm{wall}}) - V\partial_y \rho_{\mathrm{wall}} = 0,\\
    \partial_t\rho_{\mathrm{bulk}} + \partial_\varphi \left(\Phi \rho_{\mathrm{bulk}}\right) = V\rho_{\mathrm{wall}} \vert_{y=0}.
\end{cases}
    \label{eq:limiting-noD}
\end{equation}
\label{thm:weakL2}
\end{theorem}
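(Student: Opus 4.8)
The plan is to derive a closed system of evolution equations for the pair $(m_\varepsilon, u_\varepsilon)$ defined by the orthogonal splitting~\eqref{eq:forthogonal}, obtain $\varepsilon$-uniform a priori bounds, extract weakly convergent subsequences, and pass to the limit in the equations. First I would substitute the ansatz $f_\varepsilon = \varepsilon^{-1} m_\varepsilon e^{-Vy/\varepsilon} + u_\varepsilon$ into~\eqref{eq:main_fp-noD}. The key algebraic fact is that $\varepsilon^{-1}V(\varphi)e^{-V(\varphi)y/\varepsilon}$ is (up to normalization) precisely the stationary boundary-layer profile: it satisfies $-V\partial_y(\cdot) - \varepsilon\partial_y^2(\cdot) = 0$ in $y>0$ with the correct no-flux relation at $y=0$. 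Hence the parabolic-in-$y$ part of the operator annihilates the boundary-layer term, and what remains from plugging in the ansatz is an equation coupling $\partial_t m_\varepsilon$, $\partial_\varphi(\Phi m_\varepsilon)$ (with corrections from the $\varphi$-dependence of $V$ hitting $e^{-Vy/\varepsilon}$) against the bulk transport operator acting on $u_\varepsilon$. I then project this identity onto the one-dimensional span of $e^{-Vy/\varepsilon}$ in the $y$ variable (i.e.\ integrate against $e^{-Vy/\varepsilon}\,\mathrm{d}y$, using $\int_0^\infty u_\varepsilon e^{-Vy/\varepsilon}\,\mathrm{d}y = 0$) and onto its orthogonal complement, producing a coupled system: one transport-type equation for $m_\varepsilon$ on the torus, and one equation for $u_\varepsilon$ on $(0,2\pi)\times(0,\infty)$ with a source term proportional to $m_\varepsilon$ and a boundary contribution $V\rho_{\mathrm{wall}}|_{y=0}$, plus terms formally of size $O(\varepsilon)$ or $O(\sqrt\varepsilon)$ that must be controlled.

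Next I would establish the uniform bounds. The natural energy estimate comes from testing~\eqref{eq:main_fp-noD} against $f_\varepsilon$ itself: the advection term $\partial_\varphi(\Phi f_\varepsilon)$ contributes only lower-order terms (since $\Phi\in C^1_{\mathrm{per}}$), the transport term $-V\partial_y f_\varepsilon$ integrates to a boundary term at $y=0$ which, combined with the no-flux boundary condition $-Vf_\varepsilon = \varepsilon\partial_y f_\varepsilon|_{y=0}$, yields a sign-definite quantity, and the diffusion term gives $\varepsilon\|\partial_y f_\varepsilon\|_{L^2}^2$. This produces a bound on $\|f_\varepsilon(t)\|_{L^2}^2 + \varepsilon\int_0^t\|\partial_y f_\varepsilon\|_{L^2}^2$. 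Because of the scaling $\varepsilon^{-1}m_\varepsilon e^{-Vy/\varepsilon}$ has $L^2_y$-norm $\sim \varepsilon^{-1/2}\|m_\varepsilon\|$, the orthogonality in~\eqref{eq:forthogonal} must be exploited carefully: I would compute $\|f_\varepsilon\|_{L^2}^2 = \varepsilon^{-1}\|m_\varepsilon/\sqrt{2V}\,\|_{L^2(0,2\pi)}^2\cdot(\text{const}) + \|u_\varepsilon\|_{L^2}^2$ exactly, the cross term vanishing by the orthogonality condition, so a bound on $f_\varepsilon$ in a suitably $\varepsilon$-weighted norm (or, more directly, testing against $V f_\varepsilon$ or working with the weighted measure) gives $\|m_\varepsilon\|_{L^2(0,2\pi)} \le C$ and $\|u_\varepsilon\|_{L^2((0,2\pi)\times(0,\infty))} \le C$ uniformly. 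To upgrade $u_\varepsilon$ to $H^1_y$ I would either use the $\varepsilon\|\partial_y u_\varepsilon\|^2$ control together with an elliptic/transport estimate for the $u_\varepsilon$-equation, or differentiate the $u_\varepsilon$-equation and test again; the transport structure $-V\partial_y u_\varepsilon$ with $V\ge g>0$ is favorable here. With these bounds, Banach--Alaoglu yields $m_\varepsilon\rightharpoonup m$ in $L^2(0,2\pi)$ and $u_\varepsilon\rightharpoonup u$ in $H^1(0,\infty)\times L^2(0,2\pi)$ along a subsequence.

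Finally I would pass to the limit in the weak formulation of the coupled $(m_\varepsilon,u_\varepsilon)$ system. Against smooth compactly supported test functions, the linear terms pass to the limit by weak convergence; the terms carrying explicit powers of $\varepsilon$ vanish; the boundary trace $u_\varepsilon|_{y=0}\rightharpoonup u|_{y=0}$ passes because the trace operator $H^1(0,\infty)\to\mathbb{R}$ is bounded and one has weak $H^1_y$ convergence (so traces converge weakly in the appropriate sense after integrating in $\varphi$ and $t$). This identifies $m$ and $u$ as a weak solution of~\eqref{eq:limiting-noD} with $\rho_{\mathrm{wall}} = m/V$, $\rho_{\mathrm{bulk}} = u$; note the $m$-equation involves $\partial_\varphi(\Phi \rho_{\mathrm{wall}}) = \partial_\varphi(\Phi m/V)$, consistent with the boundary-layer profile carrying the factor $V$. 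Uniqueness of the limiting system (which follows from energy estimates for~\eqref{eq:limiting-noD}, a degenerate but linear transport/ODE system analogous to Theorem~\ref{thm:degenerate} with $D=0$) then shows the whole family converges, not just a subsequence.

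The main obstacle is the a priori bound: the boundary-layer term is $O(\varepsilon^{-1})$ in $L^\infty_y$ and $O(\varepsilon^{-1/2})$ in $L^2_y$, so the standard $L^2$ energy estimate for $f_\varepsilon$ does not directly give a uniform bound on $m_\varepsilon$. One must show that the orthogonal decomposition is \emph{stable} under the dynamics — that the $O(\varepsilon)$ and $O(\sqrt\varepsilon)$ remainder terms generated when the transport operator $-V\partial_y$ acts across the splitting (in particular the interaction between $\partial_\varphi$ hitting $e^{-Vy/\varepsilon}$, which produces a factor $\sim y/\varepsilon$ that is not small pointwise) genuinely contribute only controllable quantities after integrating against $e^{-Vy/\varepsilon}$, where $\int_0^\infty (y/\varepsilon) e^{-2Vy/\varepsilon}\,\mathrm{d}y = O(\varepsilon)$ saves the day. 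Making this bookkeeping rigorous — i.e.\ showing the coupled $(m_\varepsilon, u_\varepsilon)$ system is a well-posed perturbation of the limiting system with remainders that are small in the right norms — is the technical heart of the argument.
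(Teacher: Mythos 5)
Your overall strategy is the same as the paper's: substitute the orthogonal splitting into the equation, exploit the orthogonality $\int_0^\infty u_\varepsilon e^{-Vy/\varepsilon}\,\mathrm{d}y=0$ to run separate $\varepsilon$-weighted energy estimates for $u_\varepsilon$ and for $m_\varepsilon$ (the paper tests against $u_\varepsilon$ and against $m_\varepsilon e^{-Vy/\varepsilon}$, which is exactly your ``weighted norm'' $\int u_\varepsilon^2 + \int m_\varepsilon^2/(4V)$), control the dangerous cross-term coming from $\partial_\varphi$ hitting $e^{-Vy/\varepsilon}$ via $\int_0^\infty y\,e^{-2Vy/\varepsilon}\,\mathrm{d}y = O(\varepsilon^2)$, and then pass to the limit in the weak formulation using both boundary-layer test functions $h(t,\varphi)e^{-Vy/\varepsilon}$ and $\varepsilon$-independent ones. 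You correctly identified the two technical crux points (the $\varepsilon^{-1/2}$ scaling of the layer in $L^2_y$, and the $y/\varepsilon$ factor from $\partial_\varphi e^{-Vy/\varepsilon}$), and your diagnosis of how they are resolved matches the paper.

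There is, however, one genuine gap: your treatment of the trace $u_\varepsilon|_{y=0}$. You claim a uniform bound on $u_\varepsilon$ in $H^1_y$ and then invoke continuity of the trace operator. But the energy estimate only yields $\varepsilon\int_0^T\!\!\int\!\!\int(\partial_y u_\varepsilon)^2\le C$, i.e.\ $\|\partial_y u_\varepsilon\|_{L^2}\lesssim \varepsilon^{-1/2}$, which is not a uniform $H^1_y$ bound, and neither of your two suggested upgrades (an ``elliptic/transport estimate'' or differentiating and testing again) is worked out or obviously available, since differentiating in $y$ reintroduces the boundary layer at full strength. The paper instead obtains the trace bound directly from the orthogonality condition: since $\int_0^\infty u_\varepsilon e^{-Vy/\varepsilon}\,\mathrm{d}y=0$, one has $u_\varepsilon|_{y=0} = -\int_0^\infty \partial_y u_\varepsilon\, e^{-Vy/\varepsilon}\,\mathrm{d}y$, and Cauchy--Schwarz gives
\begin{equation*}
\int_0^T\!\!\int_0^{2\pi}\bigl(u_\varepsilon|_{y=0}\bigr)^2\,\mathrm{d}\varphi\,\mathrm{d}t
\;\le\; \int_0^T\!\!\int_0^{2\pi}\frac{\varepsilon}{2V}\int_0^\infty(\partial_y u_\varepsilon)^2\,\mathrm{d}y\,\mathrm{d}\varphi\,\mathrm{d}t \;\le\; C,
\end{equation*}
so the traces converge weakly in $L^2_{t,\varphi}$ to some $u_0$ \emph{without} any uniform $H^1_y$ control. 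A related subtlety you gloss over: the paper is careful not to identify this weak limit $u_0$ with $u|_{y=0}$; it derives two weak identities (one from the layer test functions, one from the $\varepsilon$-independent ones) and eliminates $u_0$ between them to arrive at \eqref{eq:limiting-noD}. Your argument as written assumes $u_0=u|_{y=0}$, which would again require stronger compactness in $y$ than the estimates provide.
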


\begin{remark}
Given $f_\varepsilon$, functions $m_\varepsilon$ and $u_\varepsilon$ are uniquely defined by \eqref{eq:forthogonal}: 
\begin{equation}\label{eq:representation}
m_\varepsilon =2V \int\limits_0^\infty f_\varepsilon e^{-V(\varphi) y/\varepsilon}\,\text{d}y \text{ and }u_\varepsilon = f_\varepsilon - \dfrac{1  }{\varepsilon} m_\varepsilon(t,\varphi) e^{-Vy/\varepsilon}.
\end{equation}
\end{remark}

\begin{remark}
As is evident from the proof of Theorem~\ref{thm:l1convergence}, the term $ (1/\varepsilon) m_\varepsilon e^{-V y / \varepsilon} $ characterizes the boundary layer structure. It satisfies the pre-limiting equation at order $ \varepsilon^{-1} $, namely
$$
- V \partial_y f_\varepsilon - \varepsilon \partial_y^2 f_\varepsilon = 0,
$$
as well as the no-flux boundary condition. The orthogonality condition on $ u_\varepsilon $ in \eqref{eq:forthogonal} implies that $ u_\varepsilon $ does not contain a boundary layer component (in the sense of the $ L^2 $ inner product) and instead represents the bulk distribution. For $ D > 0 $, the lack of \textit{a priori} $ L^2 $ estimates for $ m_\varepsilon $ and $ u_\varepsilon $ suggests that a more refined formulation is required.

\end{remark}





\section{Proof of Theorem~\ref{thm:degenerate}: Well-posedness of the limiting problem}\label{sec:well-posed}
\label{sec:well_posedness}
\subsection{Formulation of the limiting problem}

In this section, we study the limiting problem for the bulk and wall  distributions, $u(t,y,\varphi)=\rho_{\text{bulk}}$ and  $w(t,\varphi)=\rho_{\text{wall}}$, respectively. The limiting problem is 
\begin{equation}
\left\{
\begin{array}{l}
\partial_t u + \partial_\varphi(\Phi u) - D \partial_\varphi^2 u -  V\partial_y u = 0, \quad y>0, \, t>0, \, 0 \leq \varphi < 2\pi,\\
\partial_t w + \partial_\varphi(\Phi w) - D \partial_\varphi^2 u  = Vu|_{y=0},\quad y=0, \, t>0, \, 0 \leq \varphi < 2\pi.\\
\end{array}
\right.\label{eqn-1Dlimiting}
\end{equation}
We aim to show the well-posedness result for system \eqref{eqn-1Dlimiting} using the semi-group theory. 
We begin by fixing notations

\begin{itemize}
    \item $I:= [0,2\pi)$ and $L^2_\varphi:=L^2(I)$,
    \item $L^2_y:=L^2(\mathbb R)$.
    \item We use the shorthand $L^2_\varphi L^2_y$ to denote the space of functions $g(\varphi,y)$ with finite
\begin{equation}
\|f\|_{L^2_\varphi L^2_y} = \left(\int\limits_0^{2\pi}\int\limits_\mathbb{R} |f|^2 \, \text{d}y \, \text{d}\varphi\right)^{1/2}
\end{equation}
We will also denote $X:=L^2_\varphi L^2_y$. 

\noindent Higher order Sobolev spaces are introduced similarly with the account of periodic boundary conditions with respect to variable $\varphi\in [0,2\pi)$ and zero boundary conditions for $|y|\to \infty$. For example, 
\begin{align}
    H^2_\varphi H^1_y: = \bigg\{v(y,\varphi)& \bigg\vert \int\limits_0^{2\pi} \int\limits_\mathbb{R} |D^2_\varphi  v|^2 + |D_\varphi v|^2 + |v|^2 + |D_y v|^2 \text{d}y\text{d}\varphi < \infty,\nonumber \\
    &v(0,y)=v(2\pi,y) \text{ for } y\in\mathbb R,
    \quad
    \lim_{|y|\to\infty}v(\varphi,y)=0 \text{ for } \varphi\in I\bigg\}
\end{align}

\end{itemize}


We rewrite \eqref{eqn-1Dlimiting} in the form 
\begin{equation}
\partial_t \mathcal{U}+\mathcal{L} \mathcal{U} = 0, 
\end{equation}
where $\mathcal{U} = \left\{u,v\right\}$ and the operator $\mathcal{L}$ is defined as follows: 
\begin{equation}
\mathcal{L}\mathcal{U} = 
\left[
\begin{array}{cc}
B & 0 \\
VT & A
\end{array}
\right]\left(\begin{array}{c}u\\w\end{array}\right)
\end{equation}
Here, operators $A$, $B$, and $T$ are 
\begin{equation}\label{eq:operators}
    \begin{aligned}
        Aw &=\partial_\varphi(\Phi w)-D\partial_\varphi^2 w,
        &\qquad A:&\, D(A)=H_\varphi^2\subset L^2_\varphi\to L^2_\varphi,\\
        Bu &=\partial_\varphi(\Phi u)-D\partial_\varphi^2 u+V\partial_y u,
        &\qquad B:&\,D(B)=H_\varphi^2 H^1_y\subset L^2_\varphi L^2_y\to L^2_\varphi L^2_y,\\
        Tu &=u|_{y=0}& \qquad T:&\,D(T)= H_\varphi^2 H^1_y\subset L^2_\varphi L^2_y \to L^2_\varphi. 
    \end{aligned}
\end{equation}
Now, we can define the domain of operator $\mathcal{L}:\, D(\mathcal{L
})\subset \mathcal{X}\to \mathcal{X}$ and the space $\mathcal{X}$ this operator acts on: 
\begin{equation}
D(\mathcal{L}) = D(B)\otimes D(A) \text{ and }\mathcal{X}= X \otimes L^2_\varphi= L^2_\varphi L^2_y \otimes L^2_\varphi. 
\end{equation}



\subsection{$B$ is an infinitesimal generator of $C_0$-semigroup} 
\label{sec:b_c0_semigroup}
Since the first equation in \eqref{eqn-1Dlimiting} is independent of $v$ and is of the form $\partial_t u + Bu =0$, we consider its well-posedness separately. To this end, we aim to show in this subsection that $B$ is an infinitesimal generator of $c_0$-semigroup. According to Lumer-Phillips Theorem \cite{pazy2012semigroups} it is sufficient to show that there exists $\mu>0$ such that
\begin{itemize}

\item[({\it i})] ({\it Accretivity}) $\langle [B+\mu]u,u\rangle_{X}\geq 0$ for all $u\in D(B)$;

\item[({\it ii})] ({\it Maximality}) $\mathrm{Range}(B+\mu)=L^2_yL^2_\varphi$, that is, for any $f\in L^2_yL^2_\varphi$ there exists $u\in D(B)$ such that $(B+\mu)u=f$. 

\end{itemize}


\begin{lemma}[Accretivity of $B$] \label{lem:coercive-bound}
There exists $\mu_0>0$ such that for every $\lambda>\mu_0$ and every $u\in D(B)$,
\begin{equation}\label{eq:coercive-bound}
\langle(B+\lambda)u,\,u\bigr\rangle_X \ge (\lambda-\mu_0)\|u\|_X^2
+ D\|\partial_\varphi u\|_X^2.
\end{equation}
\end{lemma}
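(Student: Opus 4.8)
The plan is to compute $\langle Bu, u\rangle_X$ directly by integration by parts in both variables, track the boundary terms at $y=0$ carefully, and absorb all lower-order contributions into a shift $\mu_0\|u\|_X^2$. Writing $B u = \partial_\varphi(\Phi u) - D\partial_\varphi^2 u + V\partial_y u$ and pairing against $u$ in $X = L^2_\varphi L^2_y$, I would treat the three pieces separately. The diffusion term gives, after integrating by parts in $\varphi$ (periodicity kills the $\varphi$-boundary terms for every fixed $y$, and the decay as $|y|\to\infty$ is built into $D(B)$), the nonnegative contribution $D\|\partial_\varphi u\|_X^2$ — this is exactly the good term on the right of \eqref{eq:coercive-bound}. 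The transport-in-$\varphi$ term $\partial_\varphi(\Phi u)$ paired with $u$ equals, after one integration by parts, $-\tfrac12\int \Phi\,\partial_\varphi(u^2) = \tfrac12\int (\partial_\varphi\Phi)\,u^2$, which is bounded in absolute value by $\tfrac12\|\partial_\varphi\Phi\|_{L^\infty}\|u\|_X^2$; since $\Phi\in C^1_{\mathrm{per}}$ by (A2), this constant is finite and gets swallowed into $\mu_0$.

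The delicate piece is the advection-in-$y$ term $\langle V\partial_y u, u\rangle_X = \int_0^{2\pi}\int_{\mathbb{R}} V(\varphi)\,u\,\partial_y u\,\mathrm{d}y\,\mathrm{d}\varphi = \tfrac12\int_0^{2\pi} V(\varphi)\int_{\mathbb{R}} \partial_y(u^2)\,\mathrm{d}y\,\mathrm{d}\varphi$. Here I would note that in this well-posedness section the spatial domain has been extended to all of $\mathbb{R}$ (see the definition of $L^2_y$ and of $H^2_\varphi H^1_y$ with $\lim_{|y|\to\infty}v=0$), so integrating $\partial_y(u^2)$ over $\mathbb{R}$ produces no boundary term at all: the contribution vanishes identically. (If one instead worked on the half-line $y>0$, this term would produce $-\tfrac12\int V(\varphi)\,u(0,\varphi)^2\,\mathrm{d}\varphi \le 0$, which still has the favorable sign and would also cause no trouble; I would remark on this so the argument is robust to that convention.) Either way, the $y$-advection term contributes something $\ge 0$ (in fact $=0$ on $\mathbb{R}$), so it can only help.

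Collecting the three estimates, for any $\lambda \in \mathbb{R}$,
\begin{equation*}
\langle (B+\lambda)u, u\rangle_X \;\ge\; D\|\partial_\varphi u\|_X^2 \;+\; \Bigl(\lambda - \tfrac12\|\partial_\varphi\Phi\|_{L^\infty}\Bigr)\|u\|_X^2,
\end{equation*}
so setting $\mu_0 := \tfrac12\|\partial_\varphi\Phi\|_{L^\infty_\varphi}$ gives exactly \eqref{eq:coercive-bound} for every $\lambda > \mu_0$. The only genuinely nontrivial point — and the step I expect to require the most care in the writeup — is justifying the integrations by parts: one must check that for $u\in D(B) = H^2_\varphi H^1_y$ the relevant traces and products ($\Phi u^2$, $V u^2$, $\partial_\varphi u\cdot u$) are integrable and that the decay at $|y|\to\infty$ legitimately kills the endpoint terms; this is routine but should be stated, ideally by first proving the identity for smooth compactly-supported (in $y$) periodic-in-$\varphi$ functions and then passing to the limit by density in $D(B)$.
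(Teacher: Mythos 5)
Your proposal is correct and follows essentially the same route as the paper: integrate by parts term by term, observe that the $\varphi$-diffusion yields $D\|\partial_\varphi u\|_X^2$, the $\varphi$-transport yields $\tfrac12\int \Phi' u^2 \ge -\tfrac12\|\Phi'\|_{L^\infty}\|u\|_X^2$, and the $y$-advection vanishes since $y$ ranges over all of $\mathbb{R}$ with decay at infinity. The only cosmetic difference is that the paper takes $\mu_0 = \tfrac12\|\Phi'\|_{L^\infty} + 1$ so that $\mu_0>0$ holds even when $\Phi'\equiv 0$; your choice works just as well for the displayed inequality, and your closing remark about justifying the integrations by parts via density is a reasonable (if routine) addition.
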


\begin{proof}
Let $u\in D(B)$.  
Integration by parts in $\varphi$ and in $y$, together with periodic boundary conditions and decay at $|y|\to\infty$, gives
\begin{eqnarray*}
\langle B\,u,\,u\rangle_X
&=&\int\limits_{0}^{2\pi}\int\limits_{\mathbb R}\partial_\varphi (\Phi u) \, u\,\text{d}y\text{d}\varphi-D\int\limits_{0}^{2\pi}\int\limits_{\mathbb R}\partial_\varphi^2 u \, u\,\text{d}y\text{d}\varphi+\int\limits_{0}^{2\pi}V\int\limits_{\mathbb R}\partial_y u \, u\,\text{d}y\text{d}\varphi
\\
&=&\dfrac{1}{2}\int\limits_0^{2\pi}\!\!\int\limits_{\mathbb{R}}\Phi'\,u^2
\,\mathrm{d}y\mathrm{d}\varphi
+ D\int\limits_0^{2\pi}\!\!\int\limits_{\mathbb{R}}|\partial_\varphi u|^2
\,\mathrm{d}y\mathrm{d}\varphi\\
&\geq & -\dfrac{1}{2}\|\Phi'\|_{L^{\infty}}\|u\|_X^2+D\|\partial_\varphi u\|_{X}^2. 
\end{eqnarray*}
Then, denoting $\mu_0: = \dfrac{1}{2}\|\Phi'\|_{L^\infty} + 1$ we obtain
\begin{equation*}
\langle(B+\lambda )u,\,u\rangle_X
= \lambda\|u\|_X^2
+ \langle B\,u,\,u\rangle_X\geq (\lambda-\mu_0)\|u\|_X^2
+ D\|\partial_\varphi u\|_X^2.
\end{equation*} 

\end{proof}

\noindent Lemma~\ref{lem:coercive-bound} implies accretivity of $B$. The proof of maximality is split into three steps. First, we prove existence of a solution in $H^1_\varphi H^1_y$ for the regularized operator $B_\varepsilon:=B-\varepsilon \partial_y^2$.  Second, we obtain bounds uniform in $\varepsilon$. Third, we conclude via a standard compactness argument as $\varepsilon \to 0$ the limiting solution satisfies $ (B +\mu)u = f$, for all $f\in X$.

\begin{lemma}[Maximality of $B$]\label{lem:range-explicit}
Let $\lambda>\mu_0$.  For each $\varepsilon>0$ define
\begin{equation*}
B_\varepsilon := B - \varepsilon\,\partial_y^2,
\end{equation*}
and set $V = H^1_\varphi H^1_y$.

\smallskip 

\noindent Then for every $f\in X$ the regularized equation
\begin{equation}\label{eq:shifted_regularized_B}
( B_\varepsilon+\lambda)\,u_\varepsilon = f
\end{equation}
has a unique solution $u_\varepsilon\in V$, and there is $C$, independent of $\varepsilon$, so that
\begin{equation}
\|u_\varepsilon\|_X^2 
+ \|\partial_\varphi u_\varepsilon\|_X^2 
+ \|\partial_y u_\varepsilon\|_X^2
+\|\partial_\varphi^2 u_\varepsilon\|^2_X \le C\,\|f\|_X^2.\label{main_ineq_for_B_eps}
\end{equation}
Moreover, there exists a subsequence of $u_\varepsilon$ converging weakly in $H^2_\varphi H^1_y$ and strongly in $X$ to $u\in D(B)$ solving
\begin{equation}
(B+\lambda)\,u = f.
\end{equation}
\end{lemma}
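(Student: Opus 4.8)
The plan is to prove Lemma~\ref{lem:range-explicit} in the three stages announced in the text: (1) solvability of the regularized problem \eqref{eq:shifted_regularized_B} in $V=H^1_\varphi H^1_y$; (2) $\varepsilon$-uniform bounds culminating in \eqref{main_ineq_for_B_eps}; (3) passage to the limit $\varepsilon\to0$.

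For step (1), I would set up a bilinear form on $V$ associated with $B_\varepsilon+\lambda$, namely
\[
a_\varepsilon(u,v)=\int_0^{2\pi}\!\!\int_{\mathbb R}\Big(\partial_\varphi(\Phi u)\,v+D\,\partial_\varphi u\,\partial_\varphi v+\varepsilon\,\partial_y u\,\partial_y v+V\,\partial_y u\,v+\lambda\,u v\Big)\,\mathrm{d}y\,\mathrm{d}\varphi,
\]
and apply Lax--Milgram. Continuity of $a_\varepsilon$ on $V$ is immediate from $\Phi,\Phi',V\in L^\infty$ (recall $V\in C^1[0,2\pi]$). For coercivity on $V$: integrating by parts as in Lemma~\ref{lem:coercive-bound}, the transport term $\int V\,\partial_y u\,u$ vanishes (decay at $|y|\to\infty$, no $y$-boundary here since the regularized problem is posed on all of $\mathbb R$ as in the definition of $X$), the $\varphi$-advection term contributes $\tfrac12\int\Phi'u^2$, so
\[
a_\varepsilon(u,u)\ge (\lambda-\mu_0)\|u\|_X^2+D\|\partial_\varphi u\|_X^2+\varepsilon\|\partial_y u\|_X^2,
\]
which for $\lambda>\mu_0$ and any fixed $\varepsilon>0$ is coercive on $V$ (the $\varepsilon\|\partial_y u\|_X^2$ term supplies control of $\partial_y u$, and one needs $D>0$ for $\partial_\varphi u$; for $D=0$ one still gets coercivity in the weaker norm $\|u\|_X^2+\varepsilon\|\partial_y u\|_X^2$, which is enough to define the solution space, though then "$V=H^1_\varphi H^1_y$" should be read as $H^1_y$ only — I would note this). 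Lax--Milgram then yields a unique $u_\varepsilon\in V$ with $a_\varepsilon(u_\varepsilon,v)=\langle f,v\rangle_X$ for all $v\in V$, i.e. a weak solution of \eqref{eq:shifted_regularized_B}; elliptic regularity in $\varphi$ (the equation is a nondegenerate ODE in $\varphi$ with $y$ a parameter, after testing) then upgrades $\partial_\varphi^2 u_\varepsilon\in X$.

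For step (2), the key is to extract bounds that do \emph{not} degenerate as $\varepsilon\to0$. Testing \eqref{eq:shifted_regularized_B} against $u_\varepsilon$ gives exactly
\[
(\lambda-\mu_0)\|u_\varepsilon\|_X^2+D\|\partial_\varphi u_\varepsilon\|_X^2+\varepsilon\|\partial_y u_\varepsilon\|_X^2\le \|f\|_X\,\|u_\varepsilon\|_X,
\]
hence $\|u_\varepsilon\|_X+\sqrt D\,\|\partial_\varphi u_\varepsilon\|_X\lesssim\|f\|_X$ uniformly in $\varepsilon$. To get the $\varepsilon$-uniform $\partial_y u_\varepsilon$ and $\partial_\varphi^2 u_\varepsilon$ bounds claimed in \eqref{main_ineq_for_B_eps} — the genuinely nontrivial part — I would test against a second multiplier. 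The natural one is $-\partial_\varphi^2 u_\varepsilon$: this produces $D\|\partial_\varphi^2 u_\varepsilon\|_X^2$ and $\varepsilon\|\partial_\varphi\partial_y u_\varepsilon\|_X^2$ from the diffusion terms, while the transport term $\int V\partial_y u_\varepsilon(-\partial_\varphi^2 u_\varepsilon)$ integrates by parts (in $\varphi$, moving one derivative, using $V=V(\varphi)$ so $\partial_\varphi V=V'$) into $-\tfrac12\int V'\,(\partial_\varphi u_\varepsilon)\partial_y?$... more carefully it gives $\int(V'\partial_\varphi u_\varepsilon+V\partial_\varphi^2? )$-type terms controlled by $\|\partial_\varphi u_\varepsilon\|_X$ and $\|\partial_\varphi^2 u_\varepsilon\|_X$, absorbed by Young's inequality into the $D\|\partial_\varphi^2 u_\varepsilon\|_X^2$ term; the $\Phi$-advection term is handled the same way. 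For the $\partial_y u_\varepsilon$ bound, rearrange the equation as $V\partial_y u_\varepsilon=f-\lambda u_\varepsilon-\partial_\varphi(\Phi u_\varepsilon)+D\partial_\varphi^2 u_\varepsilon+\varepsilon\partial_y^2 u_\varepsilon$, and since $V\ge g>0$ by (A1), $\|\partial_y u_\varepsilon\|_X\le g^{-1}(\,\|f\|_X+\lambda\|u_\varepsilon\|_X+\|\partial_\varphi(\Phi u_\varepsilon)\|_X+D\|\partial_\varphi^2 u_\varepsilon\|_X+\varepsilon\|\partial_y^2 u_\varepsilon\|_X\,)$ — the only dangerous term is $\varepsilon\|\partial_y^2 u_\varepsilon\|_X$, so I would instead obtain the $\partial_y$ bound \emph{before} differentiating, by the following trick: multiply the equation by $\partial_y u_\varepsilon$ and integrate; the term $\int V|\partial_y u_\varepsilon|^2\ge g\|\partial_y u_\varepsilon\|_X^2$ is the good term, the $\varepsilon\int\partial_y^2 u_\varepsilon\,\partial_y u_\varepsilon=0$ by decay, the diffusion-in-$\varphi$ term $\int(D\partial_\varphi?)$ integrates by parts into $\int D\partial_\varphi u_\varepsilon\,\partial_\varphi\partial_y u_\varepsilon$ — hmm, this reintroduces mixed derivatives, so the cleanest route is to combine the two tests: test simultaneously against $u_\varepsilon-\partial_\varphi^2 u_\varepsilon$, get $D\|\partial_\varphi^2 u_\varepsilon\|_X^2+(\lambda-\mu_0)(\cdots)$ plus harmless lower-order terms, then feed the resulting $\|\partial_\varphi^2 u_\varepsilon\|_X\lesssim\|f\|_X$ (valid for $D>0$) into the rearranged equation to control $V\partial_y u_\varepsilon$ after first noting $\varepsilon\partial_y^2 u_\varepsilon=\varepsilon\,g^{-1}\partial_y(\text{bounded})$... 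I would present it as: the two energy estimates give $\|u_\varepsilon\|_X,\|\partial_\varphi u_\varepsilon\|_X,\sqrt\varepsilon\|\partial_y u_\varepsilon\|_X,\|\partial_\varphi^2 u_\varepsilon\|_X,\sqrt\varepsilon\|\partial_\varphi\partial_y u_\varepsilon\|_X\lesssim\|f\|_X$; then $\varepsilon\partial_y^2 u_\varepsilon=f-\lambda u_\varepsilon-\partial_\varphi(\Phi u_\varepsilon)+D\partial_\varphi^2 u_\varepsilon-V\partial_y u_\varepsilon$ is $X$-bounded \emph{modulo} the $V\partial_y u_\varepsilon$ term, and one last test of this identity against $\partial_y u_\varepsilon$ (so that $\varepsilon\int\partial_y^2 u_\varepsilon\partial_y u_\varepsilon=0$ again and $\int V|\partial_y u_\varepsilon|^2$ is the coercive term, all other terms already $X$-bounded and paired against $\partial_y u_\varepsilon$ via Young) closes the loop and yields $\|\partial_y u_\varepsilon\|_X\lesssim\|f\|_X$ uniformly. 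This bootstrap is the main obstacle — getting the $\partial_y$ estimate uniform in $\varepsilon$ despite the $\varepsilon\partial_y^2$ term, and it works precisely because of the sign condition $V\ge g>0$ from (A1), which is why that hypothesis is essential.

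For step (3): the bounds \eqref{main_ineq_for_B_eps} give a subsequence $u_\varepsilon\rightharpoonup u$ weakly in $H^2_\varphi H^1_y$. Strong convergence in $X$ on bounded $y$-strips follows from Rellich--Kondrachov (compact embedding of $H^2_\varphi H^1_y$ restricted to $\varphi\in I$, $y\in[-R,R]$ into $L^2$); uniform smallness of the tails $\int_{|y|>R}|u_\varepsilon|^2$ follows from the decay built into the space together with the uniform $\|\partial_y u_\varepsilon\|_X$ bound (or a weighted estimate), giving strong convergence in all of $X$. Passing to the limit in the weak formulation $a_\varepsilon(u_\varepsilon,v)=\langle f,v\rangle$: the term $\varepsilon\int\partial_y u_\varepsilon\partial_y v=\sqrt\varepsilon\cdot\sqrt\varepsilon\,O(1)\to0$, every other term passes by weak convergence, so $u$ is a weak solution of $(B+\lambda)u=f$; since $u\in H^2_\varphi H^1_y=D(B)$ this is a strong solution, and uniqueness is immediate from the coercivity estimate \eqref{eq:coercive-bound} of Lemma~\ref{lem:coercive-bound}. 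This completes the proof, and together with Lemma~\ref{lem:coercive-bound} it establishes via Lumer--Phillips that $B$ generates a $C_0$-semigroup.
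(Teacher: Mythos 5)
Your overall architecture is the same as the paper's: Lax--Milgram for the regularized problem on $V=H^1_\varphi H^1_y$, $\varepsilon$-uniform energy estimates via the multipliers $u_\varepsilon$ and $\partial_y u_\varepsilon$, and a compactness argument to pass to the limit, with uniqueness from Lemma~\ref{lem:coercive-bound}. Steps 1 and 3 are fine (the $D=0$ caveat is moot here, since this section takes $D>0$; and note that weak convergence already suffices to pass to the limit in the linear weak formulation, so the strong $X$-convergence, which would indeed need a weighted tail estimate, is not load-bearing).

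The genuine problem is the \emph{order} of your estimates in Step 2. You propose to obtain $\|\partial_\varphi^2 u_\varepsilon\|_X\lesssim\|f\|_X$ by testing against $-\partial_\varphi^2 u_\varepsilon$ \emph{before} you have the uniform $\partial_y$ bound. But the transport term in that test, $-\langle V\partial_y u_\varepsilon,\partial_\varphi^2 u_\varepsilon\rangle_X$, integrates by parts (in $\varphi$) into $\langle V'\partial_y u_\varepsilon,\partial_\varphi u_\varepsilon\rangle_X$ plus a term that vanishes, and since $V$ is genuinely $\varphi$-dependent ($V'\not\equiv0$ in general) this requires control of $\|\partial_y u_\varepsilon\|_X$. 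At that stage you only have $\sqrt\varepsilon\,\|\partial_y u_\varepsilon\|_X\lesssim\|f\|_X$, so the term is only bounded by $\varepsilon^{-1/2}\|f\|_X^2$ and the estimate is not uniform; your subsequent $\partial_y$ test then leans on the (unavailable) $X$-bound for $D\partial_\varphi^2 u_\varepsilon$, so the two steps are circularly dependent as written. The fix is to reverse the order, which is exactly what the paper does: the multiplier $\partial_y u_\varepsilon$ needs \emph{no} prior information on $\partial_\varphi^2 u_\varepsilon$, because $D\langle\partial_\varphi^2 u_\varepsilon,\partial_y u_\varepsilon\rangle_X=-\tfrac{D}{2}\int\!\!\int\partial_y|\partial_\varphi u_\varepsilon|^2=0$ and $\varepsilon\langle\partial_y^2u_\varepsilon,\partial_yu_\varepsilon\rangle_X=0$, while $\langle V\partial_y u_\varepsilon,\partial_y u_\varepsilon\rangle_X\ge g\|\partial_y u_\varepsilon\|_X^2$ is the coercive term; this yields $\|\partial_y u_\varepsilon\|_X\lesssim\|f\|_X$ directly from the first energy estimate. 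Only then do you go after $\partial_\varphi^2 u_\varepsilon$: your multiplier $-\partial_\varphi^2 u_\varepsilon$ now works (the offending term is absorbed by Young), whereas the paper instead writes $-D\partial_\varphi^2 u_\varepsilon-\varepsilon\partial_y^2 u_\varepsilon=f-\lambda u_\varepsilon-\partial_\varphi(\Phi u_\varepsilon)+V\partial_y u_\varepsilon$, squares the left side, and drops the nonnegative cross term $2D\varepsilon\|\partial_\varphi\partial_y u_\varepsilon\|_X^2$ to get $D^2\|\partial_\varphi^2 u_\varepsilon\|_X^2\le C\|f\|_X^2$ — two equivalent ways to finish once the ordering is corrected.
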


\begin{proof}
$~$\\
\underline{{STEP 1.}} {\it Existence and uniqueness for the regularized problem.}
Define the bilinear form for $u,v\in V$:
\begin{equation*}
a_\varepsilon(u,v)
:= \lambda\langle u,v\rangle_X
+ b(u,v)  
+ \varepsilon\,\langle \partial_y u,\partial_y v\rangle_X,
\end{equation*}
where $b(u,v)$ is the bilinear form corresponding to the operator $B$, namely, 
\begin{equation*}
b(u,v):=\langle \partial_\varphi\left(\Phi u\right),v\rangle_X +D \langle \partial_\varphi u, \partial_\varphi v \rangle_X +\langle V\partial_y u, v \rangle_X.
\end{equation*}
By the same arguments as in the proof of Lemma \ref{lem:coercive-bound} we get
\begin{equation*}
a_\varepsilon(u,u)
\ge (\lambda-\mu_0)\|u\|_X^2
+ D\,\|\partial_\varphi u\|_X^2
+ \varepsilon\,\|\partial_y u\|_X^2\geq \min\{\lambda -\mu_0,D,\varepsilon\}\|u\|_V^2
\end{equation*}
implying coercivity of the form $a_\varepsilon$. Continuity of the form $a_\varepsilon$, $a_\varepsilon(u,v)\leq C\|u\|_V\|v\|_V$ is straightforward. Then the Lax–Milgram theorem yields a unique $u_\varepsilon\in V$ solving
\begin{equation} \label{eqn:regularized-weak-form}
a_\varepsilon(u_\varepsilon,v) = \langle f,v\rangle_X
\quad\forall v\in V.
\end{equation}

\smallskip 

\noindent\underline{STEP 2.}  \textit{A priori estimates for $u_\varepsilon$, independent of $\varepsilon$.} $~$ \\ {\bf First}, substitute $v = u_\varepsilon$ into \eqref{eqn:regularized-weak-form} and use coercivity of $a_\varepsilon$ established in the previous step:
\begin{equation*}
(\lambda-\mu_0)\|u_\varepsilon\|_X^2
+ D\,\|\partial_\varphi u_\varepsilon\|_X^2
+ \varepsilon\,\|\partial_y u_\varepsilon\|_X^2\leq \langle f,u_\varepsilon\rangle_X.
\end{equation*}

\noindent Using Young's inequality to estimate the right-hand side: 
\[
\langle f,u_\varepsilon\rangle_X
\le \dfrac{\lambda-\mu_0}{2}\,\|u_\varepsilon\|_X^2
+ \dfrac{1}{2(\lambda-\mu_0)}\,\|f\|_X^2,
\]
we obtain
\begin{equation}\nonumber
\dfrac{\lambda-\mu_0}{2}\,\|u_\varepsilon\|_X^2
+ D\|\partial_\varphi u_\varepsilon\|_X^2
+ \varepsilon\|\partial_y u_\varepsilon\|_X^2
\le \dfrac{1}{2(\lambda-\mu_0)}\,\|f\|_X^2.
\end{equation}
This inequality implies an {\it a priori} bound
\begin{equation}
\label{eq:C-u_dphiu_estimate}
\|u_\varepsilon\|_X^2
+ \|\partial_\varphi u_\varepsilon\|_X^2
\le C\,\|f\|_X^2
\end{equation}
with a positive constant $C$ depending on $\lambda -\mu_0$ and $D$, but independent of $\varepsilon$. 

\smallskip

\noindent {\bf Next}, substitute $v = \partial_yu_\varepsilon$ into \eqref{eqn:regularized-weak-form}. Note that this substitution is admissible since $f\in X = L^2_\varphi L^2_y$, then, similar to Section \S 6.3.1 in Ref.~\cite[\S 6.3.1]{evans2022partial}, $u_\varepsilon \in H^2_\varphi H^2_y$ and substitution into the weak formulation \eqref{eqn:regularized-weak-form} can be treated as multiplication of the strong formulation $(\lambda + B_\varepsilon)u_\varepsilon = f$ by $v = \partial_yu_\varepsilon$. We obtain
\begin{equation}\label{eqn:3-15}
\lambda\,\langle u_\varepsilon, \partial_y u_\varepsilon\rangle_X
+ \langle B\,u_\varepsilon,\partial_yu_\varepsilon\rangle_X+ \varepsilon\,\langle \partial_y^2 u_\varepsilon, \partial_y u_\varepsilon\rangle_X
= \langle f,\partial_yu_\varepsilon\rangle_X.
\end{equation}
Next, due to assumption \eqref{assumption-on-V} we have
\begin{eqnarray*}
\langle B\,u_\varepsilon,\partial_yu_\varepsilon\rangle_X &=& \langle \partial_\varphi \left(\Phi u_\varepsilon\right),\partial_y u_\varepsilon\rangle_X + D\langle \partial_\varphi^2 u_\varepsilon, \partial_y u_\varepsilon\rangle_X+\langle V\partial_y u_\varepsilon, \partial_y u_\varepsilon \rangle_X \\
&&\geq -\dfrac{\|\Phi\|_{L^{\infty}}^2}{g}\|\partial_\varphi u_\varepsilon\|_{X}^2 + \dfrac{3g}{4}\|\partial_yu_\varepsilon\|_X^2.
\end{eqnarray*}
Using this bound in \eqref{eqn:3-15} and noting that due to the derivative of square formula ($\partial_y (v^2)=2v\partial_yv$) and the fundamental theorem of calculus, the first and third terms in the equality above vanish, we obtain
\begin{equation}
\|\partial_y u_\varepsilon\|_X^2 \leq \dfrac{16}{9g^2}\|f\|^2_X+\dfrac{8\|\Phi\|_{L^{\infty}}}
{3g^2}\|\partial_\varphi u_\varepsilon\|_{X}^2.
\end{equation}
Using \eqref{eq:C-u_dphiu_estimate} we get the existence of a positive constant $C$ (may be different from the one in \eqref{eq:C-u_dphiu_estimate}), depending on $\lambda-\mu_0$, $D$, $g$, and $\|\Phi\|_{L^{\infty}}$ but independent of $\varepsilon$, such that 
\begin{equation}
\label{eq:C-u_dy_estimate}
\|u_\varepsilon\|_X^2
+ \|\partial_\varphi u_\varepsilon\|_X^2+\|\partial_y u_\varepsilon\|_X^2
\le C\,\|f\|_X^2.
\end{equation}

\smallskip 

\noindent {Finally,} we obtain the bound for $\partial_\varphi^2 u_\varepsilon$ from \eqref{main_ineq_for_B_eps}. To this end, rewrite the equation \eqref{eq:shifted_regularized_B} for $u_\varepsilon$ in the form
\begin{equation}\label{eq:elliptic-phi-ode-phi2}
- D\partial_\varphi^2u_\varepsilon - \varepsilon \partial_y^2 u_\varepsilon= g,
\end{equation}
where
$
g
:= f
  - \lambda\,u_\varepsilon
  - \partial_\varphi\bigl(\Phi\,u_\varepsilon\bigr)
  + V\,\partial_y u_\varepsilon.
$
Squaring the left-hand side of \eqref{eq:elliptic-phi-ode-phi2} and integrating by parts, we get
\begin{eqnarray*}
\|- D \partial_\varphi^2 u_\varepsilon -\varepsilon\partial_y^2 u_\varepsilon\|_X^2 &=& D^2 \|\partial_\varphi^2 u_\varepsilon\|^2_X + \varepsilon^2 \| \partial_y^2 u_\varepsilon\|^2_X +2D\varepsilon \int\limits_0^{2\pi}\int\limits_{\mathbb R} \partial_\varphi^2 u_\varepsilon \partial_y^2 u_\varepsilon \,\text{d}y\text{d}\varphi \\
 &=&D^2 \|\partial_\varphi^2 u_\varepsilon\|^2_X +2D\varepsilon \int\limits_0^{2\pi}\int\limits_{\mathbb R} (\partial_\varphi\partial_y u_\varepsilon)^2\,\text{d}y\text{d}\varphi \\ &\geq &  D^2 \|\partial_\varphi^2 u_\varepsilon\|^2_X.
\end{eqnarray*}
On the other hand, the right-hand side of \eqref{eq:elliptic-phi-ode-phi2} can be estimated by an expression of the form $C\|f\|^2_{X}$ in view of the bound \eqref{eq:C-u_dy_estimate}. Hence, there exists a constant $C$, independent of $\varepsilon$, such that $\|\partial_\varphi^2 u_\varepsilon\|^2_X\leq C\|f\|^2_X$. Thus, we proved  the estimate \eqref{main_ineq_for_B_eps} from formulation of this Lemma.

\smallskip 

\noindent\underline{STEP 3.}  \textit{Passing to the limit in \eqref{eq:shifted_regularized_B} with respect to $\varepsilon\to 0$.} $~$ \\
The estimate \eqref{main_ineq_for_B_eps} implies a weakly converging subsequence of $\{u_\varepsilon\}$ to $u\in D(B)$ solving $(B+\lambda)\,u = f$.
 \end{proof}

\subsection{Solvability of the limiting system \eqref{eqn-1Dlimiting}}
\noindent We start with solvability of the first equation in the limiting system \eqref{eqn-1Dlimiting} which is a direct consequence of that we established that the operator $B$ is the infinitesimal generator of a $c_0$-semigroup \cite{pazy2012semigroups,evans2022partial}.  
\begin{lemma}[Solvability of   $\partial_tu+Bu=0$]\label{lem:range-explicit} For any $u_0\in H^2_\varphi H^1_y$ and $T>0$, there exists a unique solution of $u\in C([0,T];H^2_\varphi H^1_y)$ satisfying the following initial-value problem:
\begin{equation}\label{eqn-for-u}
\left\{
\begin{array}{l}
u_t + \partial_\varphi \bigl(\Phi u\bigr)-D\partial_\varphi^2 u -V \partial_y u  = 0, \\
u|_{t=0} = u_0.
\end{array}
\right.
\end{equation}

\end{lemma}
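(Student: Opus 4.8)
The plan is to deduce Lemma~\ref{lem:range-explicit} directly from the semigroup structure already established for $B$. By Lemmas~\ref{lem:coercive-bound} and the maximality Lemma (accretivity plus maximality of $B+\mu$), the Lumer--Phillips theorem gives that $-(B+\mu_0)$ generates a $C_0$-semigroup of contractions on $X=L^2_\varphi L^2_y$, and hence $-B$ generates a $C_0$-semigroup $\{S(t)\}_{t\ge 0}$ on $X$ satisfying $\|S(t)\|_{X\to X}\le e^{\mu_0 t}$. For $u_0\in X$ the function $u(t):=S(t)u_0$ is then the unique mild solution of \eqref{eqn-for-u}, and it belongs to $C([0,T];X)$ for every $T>0$. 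This immediately yields existence and uniqueness in the weakest sense; the remaining work is to upgrade the regularity to $C([0,T];H^2_\varphi H^1_y)$ when $u_0\in D(B)=H^2_\varphi H^1_y$.

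For the regularity upgrade I would invoke the standard semigroup fact that if $u_0\in D(B)$ then $t\mapsto S(t)u_0$ lies in $C([0,T];D(B))\cap C^1([0,T];X)$ and solves \eqref{eqn-for-u} in the strong sense, with $S(t)u_0\in D(B)$ for all $t$ (see \cite{pazy2012semigroups}). Since $D(B)=H^2_\varphi H^1_y$ is a Hilbert space whose graph norm is equivalent to the $H^2_\varphi H^1_y$ norm --- this equivalence is exactly what the a priori estimate \eqref{main_ineq_for_B_eps} provides, applied to the resolvent equation $(B+\lambda)u=f$ --- continuity into $D(B)$ with the graph norm is the same as continuity into $H^2_\varphi H^1_y$. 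Concretely: from $(B+\lambda)u=f$ one has $\|u\|_{H^2_\varphi H^1_y}\le C(\|Bu\|_X+\|u\|_X)$, so the $H^2_\varphi H^1_y$-norm is controlled by the graph norm, while the reverse bound is trivial since $B$ is a first/second-order differential operator; hence $C([0,T];D(B))=C([0,T];H^2_\varphi H^1_y)$ as claimed.

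Uniqueness follows from the semigroup framework as well: any two solutions in the stated class are in particular mild solutions, and mild solutions of a linear problem with a $C_0$-generator are unique. Alternatively, and perhaps cleanest for the write-up, one can give a direct energy argument: if $u$ solves \eqref{eqn-for-u} with $u_0=0$, multiply by $u$ and integrate over $\varphi\in[0,2\pi)$ and $y\in\mathbb{R}$; the advection term $\langle V\partial_y u,u\rangle$ vanishes since $V=V(\varphi)$ is independent of $y$ and $u$ decays at $|y|\to\infty$, the diffusion term contributes $D\|\partial_\varphi u\|_X^2\ge 0$, and the reorientation term is bounded below by $-\tfrac12\|\Phi'\|_{L^\infty}\|u\|_X^2$, so Gr\"onwall's inequality gives $u\equiv 0$.

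The main obstacle is not conceptual but bookkeeping: one must be careful that the a priori estimate \eqref{main_ineq_for_B_eps}, which was proven for the regularized operator $B_\varepsilon$ and passed to the limit, genuinely delivers the graph-norm equivalence $\|u\|_{H^2_\varphi H^1_y}\simeq \|u\|_X+\|Bu\|_X$ on all of $D(B)$ --- in particular that the $\|\partial_\varphi^2 u\|_X$ and $\|\partial_y u\|_X$ pieces are controlled, which is precisely the content of the three substeps (test with $u$, test with $\partial_y u$, test with $\partial_\varphi^2 u$) in the maximality proof. Once that equivalence is in hand, the passage from ``$B$ generates a $C_0$-semigroup'' to ``\eqref{eqn-for-u} is well-posed in $C([0,T];H^2_\varphi H^1_y)$'' is entirely standard, and the later statement of Theorem~\ref{thm:degenerate} about higher regularity ($u_0\in H^{k+2}_\varphi H^{k+1}_y$) would be obtained by the same argument applied to $\varphi$- and $y$-differentiated equations, using that $B$ commutes with $\partial_y$ and with translations in $\varphi$ up to lower-order terms with smooth coefficients.
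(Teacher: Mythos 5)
Your proposal is correct and follows essentially the same route as the paper: the paper states this lemma as a direct consequence of $B$ being the infinitesimal generator of a $C_0$-semigroup (established via Lumer--Phillips through the accretivity and maximality lemmas), citing~\cite{pazy2012semigroups,evans2022partial}. Your write-up merely makes explicit the standard details the paper leaves implicit, namely that for $u_0\in D(B)$ the semigroup solution stays in $C([0,T];D(B))$ and that the graph norm of $B$ is equivalent to the $H^2_\varphi H^1_y$ norm by the a priori estimate from the maximality proof.
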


\noindent Next, we turn to the second equation in \eqref{eqn-1Dlimiting} whose solution can be written in the form of the Duhamel's formula: 
\begin{equation}
w(t) = e^{-At}w_0 + \int\limits_0^{t}e^{-A(t-\tau)}T(Vu)(\tau)\,\text{d}\tau.   
\end{equation}
Here, the dependence on variable $\varphi$ is omitted for clarity. The evolution operator $e^{-At}:L^2_\varphi\to L^2_\varphi$ is well-defined since $A$ is the infinitesimal generator of a $c_0$-semigroup. Indeed, $A$ is an elliptic operator with periodic boundary conditions whose accretivity can be shown similar to Lemma~\ref{lem:coercive-bound} and maximality follows directly from the application of the Lax-Milgram theorem. The trace operator $T$ is defined in the third line of  \eqref{eq:operators}. By the trace theorem, we have $T(Vu)\in C([0,T];L^2_\varphi)$ and thus, due to the classical theory of non-homogeneous equations of linear abstract equations whose linear part the infinitesimal generator of a $c_0$-semigroup\cite[Section 4]{pazy2012semigroups}, we get the following well-posedness result for the equation for $u$.

\begin{lemma}[Solvability of   $\partial_tw+Aw=T(Vu)$]\label{lem:range-explicit} Let $u\in C([0,T];H^2_\varphi H^1_y)$ be a solution of \eqref{eqn-for-u}. For any $w_0\in H^2_\varphi$ and $T>0$, there exists a unique solution of $w\in C([0,T];H^2_\varphi )$ satisfying the following initial-value problem:
\begin{equation}
\left\{
\begin{array}{l}
w_t + \partial_\varphi \bigl(\Phi w\bigr)-D\partial_\varphi^2 w =V u|_{y=0}, \\
w|_{t=0} = w_0.
\end{array}
\right.
\end{equation}

\end{lemma}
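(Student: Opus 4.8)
The statement to prove is Lemma~\ref{lem:range-explicit} (Solvability of $\partial_t w + A w = T(Vu)$): given the bulk solution $u\in C([0,T];H^2_\varphi H^1_y)$ from the previous lemma, produce a unique $w\in C([0,T];H^2_\varphi)$ solving the inhomogeneous Fokker--Planck equation on the boundary with source $Vu|_{y=0}$. The plan is to invoke the abstract theory of non-homogeneous linear evolution equations whose homogeneous part is governed by a $C_0$-semigroup, exactly as cited after the statement from Pazy's book. I would carry this out in four steps.

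First I would verify that $-A$ generates a $C_0$-semigroup $e^{-At}$ on $L^2_\varphi$. This is entirely parallel to the treatment of $B$ in Section~\ref{sec:b_c0_semigroup}: by the Lumer--Phillips theorem it suffices to check accretivity of $A+\mu$ for large $\mu$ and maximality. Accretivity follows from integration by parts in $\varphi$ with the periodic boundary conditions, giving $\langle Aw,w\rangle_{L^2_\varphi} = \tfrac12\int_0^{2\pi}\Phi' w^2\,\mathrm d\varphi + D\|\partial_\varphi w\|_{L^2_\varphi}^2 \ge -\tfrac12\|\Phi'\|_{L^\infty}\|w\|_{L^2_\varphi}^2$, so $\mu_0 = \tfrac12\|\Phi'\|_{L^\infty}+1$ works just as in Lemma~\ref{lem:coercive-bound}. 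Maximality, i.e.\ surjectivity of $A+\lambda$ on $L^2_\varphi$ for $\lambda>\mu_0$, follows directly from Lax--Milgram applied to the coercive, bounded bilinear form $\lambda\langle w,v\rangle + \langle\partial_\varphi(\Phi w),v\rangle + D\langle\partial_\varphi w,\partial_\varphi v\rangle$ on $H^1_\varphi$ (no regularization in a second variable is needed here since there is only the $\varphi$ variable, which is the decisive simplification relative to $B$). Elliptic regularity in $\varphi$ then gives $D(A)=H^2_\varphi$.

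Second, I would check that the forcing term $t\mapsto T(Vu)(t) = V(\varphi)\,u(t,0,\varphi)$ is continuous from $[0,T]$ into $L^2_\varphi$, in fact into $H^2_\varphi$. Since $u\in C([0,T];H^2_\varphi H^1_y)$ and the trace map $H^1_y\to\mathbb R$, $v\mapsto v|_{y=0}$, is bounded, composing with multiplication by $V\in C^1_{\mathrm{per}}$ (which is bounded on $H^2_\varphi$, as $H^2_\varphi$ is a Banach algebra on the circle, or more simply since $V, V', V''$ are bounded) gives $T(Vu)\in C([0,T];H^2_\varphi)$; in particular $T(Vu)\in C([0,T];L^2_\varphi)$. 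Third, with $w_0\in H^2_\varphi = D(A)$ and a continuous $D(A)$-valued (or even just a $C^1$ or Hölder-in-time $L^2_\varphi$-valued, per Pazy Ch.~4, Theorem~2.4 and Corollary~2.5) forcing, the mild solution given by Duhamel's formula
\begin{equation*}
w(t) = e^{-At}w_0 + \int_0^t e^{-A(t-\tau)} T(Vu)(\tau)\,\mathrm d\tau
\end{equation*}
is in fact a classical solution, belongs to $C([0,T];D(A)) = C([0,T];H^2_\varphi)$, and is the unique such solution; uniqueness is immediate from linearity and the semigroup property (subtracting two solutions yields $w_t+Aw=0$, $w(0)=0$, hence $w\equiv0$). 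Fourth, I would note that the higher-regularity claim of Theorem~\ref{thm:degenerate} follows by bootstrapping: differentiating the equations in $\varphi$ and applying the same argument, using that $\Phi, V\in C^\infty$ (or as smooth as needed) so that commutators with $\partial_\varphi^k$ are lower order, and using the improved regularity of $u$ from the corresponding higher-regularity version of the previous lemma.

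The only genuine subtlety — and the point I would be most careful about — is matching the regularity of the source against the hypotheses of the abstract non-homogeneous theory: Pazy's result that the mild solution is classical requires the forcing to be continuous into $D(A)$ \emph{or} Hölder continuous (or $C^1$) into the base space $L^2_\varphi$, not merely continuous into $L^2_\varphi$. Here we are fine because $u\in C([0,T];H^2_\varphi H^1_y)$ actually delivers $T(Vu)\in C([0,T];H^2_\varphi) = C([0,T];D(A))$, which is the stronger of the two sufficient conditions, so no time-regularity of the source is needed; I would make this explicit rather than gloss over it. Everything else is a routine transcription of the $B$-analysis to the strictly simpler one-variable operator $A$.
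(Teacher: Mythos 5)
Your proposal follows essentially the same route as the paper: show $A$ generates a $C_0$-semigroup on $L^2_\varphi$ via accretivity (integration by parts, exactly as for $B$ in Lemma~\ref{lem:coercive-bound}) plus maximality by Lax--Milgram, observe that the trace gives $T(Vu)\in C([0,T];L^2_\varphi)$, and conclude by Duhamel's formula and the classical theory of non-homogeneous abstract evolution equations; this is precisely the paper's argument. The one place you go beyond the paper is the claim that $T(Vu)\in C([0,T];H^2_\varphi)=C([0,T];D(A))$, which you use to upgrade the mild solution to a classical, $D(A)$-valued one: be aware that this requires control of the mixed derivatives $\partial_y\partial_\varphi^k u$ (so that $\partial_\varphi^k$ commutes with the trace at $y=0$), which the paper's explicit definition of $H^2_\varphi H^1_y$ does not list, so you should either verify that regularity from the equation for $u$ or read the space in the Bochner sense $H^2_\varphi(H^1_y)$; the paper itself only asserts $T(Vu)\in C([0,T];L^2_\varphi)$ and leaves this upgrade implicit.
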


\smallskip 

\noindent The regularity result for \( k = 1 \) is established by formally differentiating the first equation in \eqref{eq:main_degenerate} with respect to \( y \) and \( \varphi \), yielding equations for \( \partial_y u \) and \( \partial_\varphi u \) of the form
$
\partial_t v + Bv = F(u),
$
where \( v = \partial_y u \) or \( v = \partial_\varphi u \), and \( F(u) \in C([0,T]; X) \). The result then follows by applying the classical theory of non-homogeneous evolution equations~\cite[Section~4]{pazy2012semigroups}. The case of general integer \( k > 1 \) is handled by induction.

\smallskip 

\noindent This concludes the proof of Theorem~\ref{thm:degenerate}.

\section{Proof of Theorem~\ref{thm:l1convergence}: $L^1$-convergence for $D> 0$} \label{sec:l1}




We first construct asymptotic expansions for the solution in two distinct regions: an outer region (or bulk) where the solution varies slowly, and an inner region (the boundary layer) where the solution varies rapidly. The overall dynamics are then found by enforcing a matching condition between these two expansions. Finally, we prove that the $L^1$ norm of the difference between the composite expansion and the original solution vanishes as $\varepsilon\to 0$.

\subsubsection*{Outer solution}

In the outer region, away from the boundary ($y > 0$), we assume a regular asymptotic expansion for the probability distribution function:
\begin{equation}
f_\varepsilon (t,\varphi, y) \sim  u_0(t,\varphi, y) + \varepsilon u_1(t,\varphi, y) + ...
\end{equation}
Substituting this expansion into the governing equation from \eqref{eq:main_fp} we get
\begin{equation*}
\partial_t f_\varepsilon + \partial_\varphi(\Phi f_\varepsilon) - D\partial_\varphi^2 f_\varepsilon -V\partial_y f_\varepsilon-\varepsilon \partial_y^2 f_\varepsilon = 0,
\end{equation*}
and collecting terms of like powers in $\varepsilon$ yields a hierarchy of equations: 
\begin{eqnarray*}
&& \partial_t u_0 + \partial_\varphi(\Phi u_0) - D\partial^2_\varphi u_0 - V \partial_y u_0 \\ 
&& \hspace{20pt} + \varepsilon \left[\partial_t u_1 + \partial_\varphi(\Phi u_1) - D\partial^2_\varphi u_1 - V \partial_y u_1 - \partial_y^2 u_0\right]\\
&& \hspace{20pt} + ... \\
&& \hspace{20pt} = 0.
\end{eqnarray*}
\noindent The leading-order, $O(1)$ \& $O(\varepsilon)$, problems govern the bulk distributions $u_0$ and $u_1$:
\begin{eqnarray}
&&\partial_t u_0 + \partial_\varphi(\Phi u_0) - D\partial^2_\varphi u_0 - V \partial_y u_0 = 0,\label{eqn-for-u_0}\\
&&\partial_t u_1 + \partial_\varphi(\Phi u_1) - D\partial^2_\varphi u_1 - V \partial_y u_1 = \partial_y^2 u_0.\label{eqn-for-u_1}
\end{eqnarray}

\subsubsection*{Inner solution $-$ Boundary layer at $y=0$}
We introduce a rescaled spatial variable $z = y/\eps$ and then the unknown function $f_\varepsilon$ at the boundary $\{y=0\}$ can be represented as $f_\varepsilon(t,\varphi,y)=p_\varepsilon(t,\varphi,z)$. In terms of the inner variable $z$, the governing equation with boundary conditions in \eqref{eq:main_fp} become:
\begin{equation}
\left\{
\begin{array}{l}
\partial_t p_\varepsilon + \partial_\varphi(\Phi p_\varepsilon) - D\partial^2_\varphi p_\varepsilon + \frac{1}{\eps}(-V\partial_z p_\varepsilon - \partial_z^2 p_\varepsilon) = 0, \\
\left[-Vp_\varepsilon-\partial_z p_\varepsilon\right]|_{z=0} = 0.
\end{array}
\right.\label{governing-pde-for-p-eps}
\end{equation} 
We expand $p_\varepsilon$ as follows starting from $\varepsilon^{-1}$ term: 
\begin{equation*}
p_\varepsilon (t, \varphi, z) \sim \eps^{-1} p_{-1}(t, \varphi, z) + p_0(t, \varphi, z) + ...
\end{equation*}
The leading $O(\eps^{-1})$ term is necessary to capture the steep gradient of $f_\varepsilon$ near the boundary. Substituting this expansions into the governing equation and the boundary condition in \eqref{governing-pde-for-p-eps} we get, respectively,  
\begin{eqnarray*}
&&\varepsilon^{-2}\left[-V \partial_z p_{-1} - \partial_z^2 p_{-1}\right] \\
&& \hspace{20pt}+\varepsilon^{-1}\left[\partial_t p_{-1} + \partial_\varphi ( \Phi p_{-1}) - D \partial_\varphi^2 p_{-1}-\left\{-V \partial_z p_{0} - \partial_z^2 p_{0}\right\}\right]\\
&& \hspace{20pt}+...\\
&& \hspace{25pt} = 0,
\end{eqnarray*}
and
\begin{eqnarray*}
\left[-V  p_{-1} - \partial_z p_{-1}\right]|_{z=0} + \varepsilon 
\left[-V p_{0} - \partial_z p_{0}\right]|_{z=0} + ...
\end{eqnarray*}

These expansions for $p_\varepsilon$ yield the following equations at leading  orders of $\varepsilon$. Specifically, we have the following boundary value problem for $p_{-1}$:
\begin{equation} \label{eq:p_neg1_bvp}
\begin{cases}
-V \partial_z p_{-1} - \partial_z^2 p_{-1} = 0, \quad & z > 0, \\
(-V p_{-1} - \partial_z p_{-1})|_{z=0} = 0.
\end{cases}
\end{equation}
and for $p_0$:
\begin{equation}\label{eq:p0_bvp}
\begin{cases}
V \partial_z p_0 + \partial_z^2 p_0 = \partial_t p_{-1} + \partial_\varphi ( \Phi p_{-1}) - D \partial_\varphi^2 p_{-1}, \quad z>0\\
(-V  p_{0} - \partial_z p_{0})|_{z=0} = 0
\end{cases}
\end{equation}

\subsubsection*{Matching asymptotic expansions}
The asymptotics are matched by their corresponding order of $\varepsilon$, noting that the inner and outer expansions must be consistent. This is enforced via a matching condition, which requires that the behavior of the inner solution far from the wall (in terms of variable $z$) corresponds to the short-range behavior of the outer solution:
\begin{equation}
\lim_{z \to \infty} p_k(t, \varphi, z) = u_k(t, \varphi, 0), \quad k = -1, 0, 1, \dots \label{matching-condition}
\end{equation}
We take $u_{-1} \equiv 0$.

Next, we note that the boundary value problem \eqref{eq:p_neg1_bvp} for $p_{-1}$ has the general solution: 
\begin{equation}
p_{-1}(t,\varphi,z) = \hat{p}_{-1}(t,\varphi)\cdot V(\varphi)e^{-zV(\varphi)}.
\label{formula-for-p--1}
\end{equation}
Here, we factor out $V(\varphi)$ to simplify calculations below. Note that $\int\limits_0^\infty Ve^{-zV}\,\text{d}z=1$ so $Ve^{-zV}$ can be viewed as an approximation of $\delta$-function and thus $\hat{p}_{-1}$ is the limiting wall distribution.

To determine the governing equation for $\hat{p}_{-1}$, we integrate equation \eqref{eq:p0_bvp} with respect to $z$ from $0$ to $\infty$:
\begin{equation}
\int\limits_0^\infty (V \partial_z p_0 + \partial_z^2 p_0) \,\text{d}z = \int\limits_0^\infty \left( \partial_t p_{-1} + \partial_\varphi ( \Phi p_{-1}) - D \partial_\varphi^2 p_{-1} \right) \,\text{d}z.\label{integral-eqn-for-p--1}
\end{equation}
The left-hand side of \eqref{integral-eqn-for-p--1} evaluates to:
\begin{equation*}
\int\limits_0^\infty (V \partial_z p_0 + \partial_z^2 p_0)\,\text{d}z =\left.\left[V p_0 + \partial_z p_0\right]\right|_0^\infty = V u_0|_{y=0}.
\end{equation*}

Here we used the matching condition for $p_0$ and the no-flux boundary condition from \eqref{eq:p0_bvp}. The right-hand side of \eqref{integral-eqn-for-p--1} becomes:
\begin{align*}
\int\limits_0^\infty \left(\partial_t + \mathcal{L}_\varphi \right) \left( \hat{p}_{-1} V e^{-Vz} \right)\,\text{d}z &= \left(\partial_t + \mathcal{L}_\varphi \right) \left[\hat{p}_{-1} \int\limits_0^\infty V e^{-Vz}\,\text{d}z\right] \\&
= \left(\partial_t + \mathcal{L}_\varphi \right) \hat{p}_{-1},
\end{align*}
where $\mathcal{L}_\varphi(\cdot) := \partial_\varphi(\Phi \, \cdot) - D\partial_\varphi^2(\, \cdot \, )$. Equating representations we found for the each side of \eqref{integral-eqn-for-p--1} we get the evolution equation for the wall distribution $\hat{p}_{-1}$:
\begin{equation}
\partial_t \hat{p}_{-1} + \partial_\varphi(\Phi \hat{p}_{-1}) - D\partial^2_\varphi \hat{p}_{-1} = V u_0|_{y=0}.\label{eqn-for-hat-p_-1}
\end{equation}
This equation, coupled with the equation for the bulk distribution $u_0$, forms the limiting system; after identifying $u=u_0$, $w=p_{-1}$ the limiting system is exactly~\eqref{eqn-1Dlimiting}. Though we managed to find a closed form system for $u_0$ and $p_{-1}$, to prove convergence of $f_\varepsilon$ to the combination of solutions of the limiting system, we need to find more information about $p_0$. To this end, substitute \eqref{formula-for-p--1} into the equation in the boundary-value problem \eqref{eq:p0_bvp}: 
\begin{equation}
V\partial_zp_0+\partial_z^2p_0 = (A_1+A_2z+A_3z^2)e^{-Vz},
\label{ode-for-p_0}
\end{equation}
where $A_1$, $A_2$, and $A_3$ are independent of $z$ and defined by 
\begin{eqnarray*}
A_1&=&(\partial_t+\mathcal{L}_\varphi)(\hat{p}_{-1}V)=\partial_t\left(\hat{p}_{-1}V\right)+\partial_\varphi\left(\Phi\hat{p}_{-1}V\right)-D\partial_\varphi^2\left(\hat{p}_{-1}V\right),\\
A_2&=&-V'V\Phi\hat{p}_{-1}+2DV'\partial_\varphi\left(\hat{p}_{-1}V\right)+DV''V\Phi\hat{p}_{-1}, \\ 
A_3&=& -D\left(V'\right)^2V\Phi\hat{p}_{-1}.
\end{eqnarray*}
Integration of \eqref{ode-for-p_0} with respect to $z$ together with matching condition \eqref{matching-condition} gives
\begin{equation}
p_0(t,z,\varphi) = \hat{u}_0(t,\varphi) + m_3(z,t,\varphi)e^{-zV}, \label{another-representation-for-p_0}
\end{equation}
where $m_3$ is the cubic polynomial with respect to $z$ with coefficients independent of $\varepsilon$ and depending on $t$ and $\varphi$ and $\hat{u}_0(t,\varphi)=u_0(t,\varphi,0)$.  
The representation \eqref{another-representation-for-p_0} implies
\begin{equation}
\partial_t p_0 +\partial_\varphi\left(\Phi p_0\right)-D\partial_\varphi^2 p_0 = \partial_t \hat{u}_0 +\partial_\varphi\left(\Phi\hat{u}_0\right)-D\partial_\varphi \hat{u}_0 + m_5e^{-Vz}. \label{eqn_for_p_0}
\end{equation}
Here, $m_5=m_5(z,t,\varphi)$ is the polynomial with respect to $z$ of the degree 5 with coefficients independent of $\varepsilon$ and depending on $t$ and $\varphi$. 

\subsubsection*{Composite expansion}
The composite expansion $\hat{f}_\varepsilon$ for the solution of the boundary-value problem \eqref{eq:main_fp} is given by
\begin{equation}\label{eq:composite_exp}
\hat{f}_{\varepsilon}(t,\varphi,y) := \frac{1}{\varepsilon}p_{-1}\left(t,\varphi,\frac{y}{\varepsilon}\right)+p_{0}\left(t,\varphi,\frac{y}{\varepsilon}\right)+u_{0}(t,\varphi,y)-u_{0}(t,\varphi,0),
\end{equation}
where $p_{-1}$, $p_0$, and $u_0$ are given by \eqref{formula-for-p--1}, \eqref{eqn-for-hat-p_-1}, \eqref{eqn_for_p_0}, and \eqref{eqn-for-u_0}. Next, we write the boundary-value problem for $\hat{f}_\varepsilon$. 
Specifically, the PDE for $\hat{f}_\varepsilon$ is
\begin{eqnarray}
&& \partial_t \hat{f}_\varepsilon +\partial_\varphi (\Phi \hat{f}_\varepsilon) - D \partial_\varphi^2\hat{f}_\varepsilon +\left[-V\partial_y\hat{f}_\varepsilon - \varepsilon\partial_y^2 \hat{f}_\varepsilon\right]\nonumber\\
&& = \dfrac{1}{\varepsilon}\left[\partial_t p_{-1}+\partial_\varphi\left(\Phi p_{-1}\right)-D\partial_\varphi^2 p_{-1}\right] -\dfrac{1}{\varepsilon^2}\left[V\partial_z p_{-1}+\partial_z^2 p_{-1}\right]\nonumber \\
&& \hspace{25 pt}+\left[\partial_t p_{0}+\partial_\varphi\left(\Phi p_{0}\right)-D\partial_\varphi^2 p_{0}\right] -\dfrac{1}{\varepsilon}\left[V\partial_z p_{0}+\partial_z^2 p_{0}\right]\nonumber \\
&& \hspace{25 pt}+\left[\partial_t u_0 +\partial_\varphi (\Phi u_0) - D \partial_\varphi^2u_0 -V\partial_yu_0\right] - \varepsilon\partial_y^2 u_0 \nonumber\\
&& \hspace{25pt}-\left[\partial_t \hat{u}_0 +\partial_\varphi (\Phi \hat{u}_0) - D \partial_\varphi^2\hat{u}_0\right]\nonumber\\
&&= m_5\left(\dfrac{y}{\varepsilon},t,\varphi\right)e^{-Vy/\varepsilon}-\varepsilon\partial_y^2 u_0. \nonumber 
\end{eqnarray}
Similarly, the boundary conditions for $\hat{f}_\varepsilon$ at $\{y=0\}$: 
\begin{eqnarray}
-V\hat{f}_\varepsilon-\partial_y\hat{f}_\varepsilon&=&\dfrac{1}{\varepsilon}\left[-Vp_{-1}-\partial_zp_{-1}\right]+\left[-Vp_0-\partial_zp_0\right]\nonumber \\&&+\left[-Vu_0-\varepsilon\partial_yu_0\right]+V\hat{u}_0\nonumber \\
&=& -\varepsilon\partial_y u_0 - V(u_0 - \hat{u}_0)\nonumber \\
&=& -\varepsilon\partial_y u_0. \nonumber
\end{eqnarray}	
Therefore, $\hat{f}_\varepsilon$ satisfies the following boundary-value problem 
\begin{equation}
\left\{
\begin{array}{l}
\partial_t \hat{f}_\varepsilon +\partial_\varphi (\Phi \hat{f}_\varepsilon) - D \partial_\varphi^2\hat{f}_\varepsilon +\left[-V\partial_y\hat{f}_\varepsilon - \varepsilon\partial_y^2 \hat{f}_\varepsilon\right] = m_5\left(\dfrac{y}{\varepsilon},t,\varphi\right)e^{-Vy/\varepsilon}-\varepsilon\partial_y^2 u_0,\\
-V\hat{f_\varepsilon}-\eps\partial_y\hat{f}_\varepsilon|_{y=0}=-\varepsilon \partial_y u_0|_{y=0}.
\end{array}
\right.\nonumber
\end{equation}
Note that if we denote right-hand sides of the boundary-value problem above by $R_\varepsilon$ and $r_\varepsilon$, that is, 
\begin{equation}\label{def-of-residuals}
R_\varepsilon: = m_5\left(\dfrac{y}{\varepsilon},t,\varphi\right)e^{-Vy/\varepsilon}-\varepsilon\partial_y^2 u_0 \text{ and }r_\varepsilon: = -\varepsilon \partial_y u_0|_{y=0},
\end{equation}
then 
\begin{equation}
\int\limits_{0}^{2\pi}\int\limits_0^{\infty} |R_\varepsilon| \,\text{d}y\text{d}\varphi + \int\limits_0^{2\pi} |r_\varepsilon|\,\text{d}\varphi = O(\varepsilon).
\label{residual_estimates}
\end{equation}

\subsubsection*{$L^1$-convergence of composite expansion}
Let $E_\eps(t, \varphi, y)$ be the difference between the pre-limiting solution $f_\varepsilon$ and composite expansion $\hat{f}_\varepsilon$:
\begin{equation}
E_\eps := f_\eps - \hat{f}_\eps.
\end{equation}
$E_\eps$ satisfies the following boundary-value problem:
\begin{equation} \label{eq:error_system}
\begin{cases}
    \partial_t E_\eps + \partial_\varphi(\Phi E_\eps) - D\partial^2_\varphi E_\eps - V\partial_y E_\eps - \eps\partial_y^2 E_\eps = R_\eps, \\
    [-V E_\eps - \eps\partial_y E_\eps]|_{y=0} = r_\eps,
\end{cases}
\end{equation}
where $R_\eps$ and $r_\eps$ are from \eqref{def-of-residuals}. Next, we obtain an $L^1$ {\it a priori} estimate for $E_\varepsilon$. Note that $E_\varepsilon$ is not sign-definite and thus such an estimate cannot be obtained by simply integrating the equation for $E_\varepsilon$. Instead, we will use the multiplier which is an approximation of $\sgn(E_\varepsilon)$. Namely, introduce 
$$J_\eta(s) := \sqrt{s^2 + \eta^2}-\eta$$ for a small parameter $\eta > 0$. Note the following properties of $J_\eta(s)$ as $\eta \to 0^+$:
\begin{itemize}
    \item $J_\eta(s) \to |s|$,
    \item $J'_\eta(s) = \frac{s}{\sqrt{s^2+\eta^2}} \to \sgn(s)$ for $s \neq 0$,
    \item $J''_\eta(s) = \frac{\eta^2}{(s^2+\eta^2)^{3/2}} > 0$.
\end{itemize}
The term $'-\eta'$ in the definition of $J_\eta$ is needed because $y$ ranges on an unbounded interval and we need $J_\eta(E_\eps)\in L^1$. Indeed, due to that $J_\eta(s)< |s|$ and $f_\eps,\hat{f}_\eps$ are from $L^1$ we have that $J_\eta(E_\eps)$ is from $L^1$. 

\noindent Next, multiplying by $J'(E_\eps)$ and integrating the equation for $E_\varepsilon$ over $\varphi$ and $y$, we get:
\begin{eqnarray}
    \frac{\mathrm{d}}{\mathrm{d}t}\left[ \int\limits_0^{2\pi}\int\limits_0^{\infty} J_\eta(E_\eps) \,\mathrm{d}y\mathrm{d}\varphi\right] &=& \int\limits_0^{2\pi}\int\limits_0^{\infty}  J'_\eta(E_\eps) \left( -\partial_\varphi(\Phi E_\eps) + D\partial^2_\varphi E_\eps  \right) \,\mathrm{d}y\mathrm{d}\varphi \nonumber\\
    &&+ \int\limits_0^{2\pi}\int\limits_0^{\infty}J'_\eta(E_\eps) \left( V\partial_y E_\eps +\eps\partial_y^2 E_\eps \right)\,\mathrm{d}y\mathrm{d}\varphi \nonumber \\
    &&+ \int\limits_0^{2\pi}\int\limits_0^{\infty}J'_\eta(E_\eps) R_\eps \,\,\mathrm{d}y\mathrm{d}\varphi \nonumber\\
    &=& \text{I}_1+\text{I}_2+\text{I}_3.\label{eq:error_identity}
\end{eqnarray}
Using integration by parts we can rewrite the first integral, $\text{I}_1$,  in the right-hand side of \eqref{eq:error_identity} as follows: 
\begin{eqnarray*}
\text{I}_1=\int\limits_0^{2\pi}\int\limits_0^{\infty}\partial_\varphi \Phi \left[J_\eta(E_\eps)-J'_\eta(E_\eps)E_\eps\right]\,\mathrm{d}y\mathrm{d}\varphi-D\int\limits_0^{2\pi}\int\limits_0^{\infty}J''_\eta(E_\eps) \left(\partial_\varphi E_\eps\right)^2\,\mathrm{d}y\mathrm{d}\varphi.
\end{eqnarray*}
Note that 
\begin{equation*}
J_\eta(E_\eps)-J'_\eta(E_\eps)E_\eps = \dfrac{\eta E_\eps^2}{\sqrt{\eta^2+E_\eps^2}\,(\eta+\sqrt{\eta^2+E_\eps^2})}< |E_\eps|.
\end{equation*}
Therefore, 
\begin{equation*}
\text{I}_1 \leq \|\Phi'\|_{L^{\infty}} \int\limits_0^{2\pi}\int\limits_0^{\infty} |E_\eps|\,\mathrm{d}y\mathrm{d}\varphi.
\end{equation*}
Similarly, we rewrite $\text{I}_2$: 
\begin{eqnarray*}
\text{I}_2&=&-\int\limits_0^{2\pi}\int\limits_0^{\infty}\left.\left[J_\eta(E_\eps)V+\varepsilon J'_\eta(E_\eps)\partial_yE_\eps\right]\right|_{y=0}\,\mathrm{d}\varphi-\varepsilon\int\limits_0^{2\pi}\int\limits_0^{\infty}J''_\eta(E_\eps) \left(\partial_y E_\eps\right)^2\,\mathrm{d}y\mathrm{d}\varphi 
\end{eqnarray*}
Due to boundary conditions for $E_\eps$ in \eqref{eq:error_system} we get 
\begin{equation*}
\left.\left[J_\eta(E_\eps)V+\varepsilon J'_\eta(E_\eps)\partial_yE_\eps\right]\right|_{y=0}=\left.\left[V\left\{J_\eta(E_\eps)-J'_\eta(E_\eps)E_\eps\right\}-r_\eps J'(E_\eps)\right]\right|_{y=0}
\end{equation*}
Now, use $J_\eta(E_\eps)-J'_\eta(E_\eps)E_\eps\leq \eta$ to obtain 
\begin{equation*}
\text{I}_2 \leq 2\pi\|V\|_{L^{\infty}}\eta +\int\limits_0^{2\pi}|r_\eps|\,\text{d}\varphi.
\end{equation*}
Collecting estimates for $\text{I}_1$ and $\text{I}_2$ and substituting them into \eqref{eq:error_identity} we get 
\begin{eqnarray}
&&\frac{\mathrm{d}}{\mathrm{d}t}\left[ \int\limits_0^{2\pi}\int\limits_0^{\infty} J_\eta(E_\eps) \,\mathrm{d}y\mathrm{d}\varphi\right]\leq\|\Phi'\|_{L^{\infty}} \int\limits_0^{2\pi}\int\limits_0^{\infty} J_\eta(E_\eps)\,\mathrm{d}y\mathrm{d}\varphi\nonumber \\ && \hspace{90pt}+2\pi\|V\|_{L^{\infty}}\eta +\int\limits_0^{2\pi}|r_\eps|\,\text{d}\varphi+\int\limits_0^{2\pi}\int\limits_0^{\infty} |R_\eps|\,\mathrm{d}y\mathrm{d}\varphi\nonumber \\
 && \hspace{90pt}+\|\Phi'\|_{L^{\infty}} \int\limits_0^{2\pi}\int\limits_0^{\infty} |J_\eta(E_\eps)-|E_\eps||\,\mathrm{d}y\mathrm{d}\varphi.\label{almost-final-inequality}
\end{eqnarray}
 By integrating this inequality and passing to the limit $\eta\to 0$ we get the desired convergence for $0\leq t \leq T$ for some constant $C_T>0$:
 \begin{equation}
\int\limits_0^{2\pi}\int\limits_0^{\infty}|E_\eps(t,y,\varphi)|\,\mathrm{d}y\mathrm{d}\varphi \leq C_T \int\limits_0^T\left(\int\limits_0^{2\pi}|r_\eps|\,\text{d}\varphi+\int\limits_0^{2\pi}\int\limits_0^{\infty} |R_\eps|\,\mathrm{d}y\mathrm{d}\varphi\right)\,\text{d}t.\label{estimate-l1}
 \end{equation}
 Note that 
\begin{equation}
\int\limits_0^{2\pi}\int\limits_0^{\infty}|J_{\eta}(E_\eps) - |E_\eps|| \,\mathrm{d}y\mathrm{d}\varphi =  \int\limits_0^{2\pi}\int\limits_0^{\infty} \dfrac{2\eta|E_\eps|}{\sqrt{E_\eps^2+\eta^2}+\eta+|E_\eps|} \,\mathrm{d}y\mathrm{d}\varphi.
\end{equation}
and this integral (and similarly, the integral with integration over $0\leq t\leq T$) vanishes as $\eta\to 0$ by Dominated Convergence Theorem. Combination of \eqref{estimate-l1} and \eqref{residual_estimates} concludes the proof of Theorem~\ref{thm:l1convergence}. 




\section{Proof of Theorem~\ref{thm:weakL2}: weak $L^2$-convergence for $D=0$}\label{sec:l2}

Let \( f_\varepsilon \) be a solution of \eqref{eq:main_fp-noD}. Recall that \( m_\varepsilon \) and \( u_\varepsilon \) are defined via the orthogonal decomposition in \eqref{eq:forthogonal}. We divide the proof into two parts. First, in Lemma~\ref{prop:gronwall}, we establish \textit{a priori} estimates in $L^2$. Second, we use these estimates to pass to the limit \( \varepsilon \to 0 \) and recover the limiting system \eqref{eq:limiting-noD} in Lemma~\ref{prop:weak_convergence}.

Substitute the decomposition \eqref{eq:forthogonal} into the equation from \eqref{eq:main_fp-noD}:
    \begin{align}
        \label{eq:main_plugged_in}
            \dfrac{1}{\varepsilon} \partial_t m_\varepsilon e^{-Vy/\varepsilon} &+ \dfrac{1}{\varepsilon}\partial_\varphi\left(\Phi m_\varepsilon e^{-Vy/\varepsilon}\right)\nonumber \\
            &\hspace{-20pt}+ \partial_t u_\varepsilon + \partial_\varphi\left(\Phi u_\varepsilon\right) -V\partial_y u_\varepsilon -\varepsilon \partial_y^2 u_\varepsilon = 0.
    \end{align}
    and $u_\varepsilon$ satisfies no-flux boundary conditions
    \begin{equation}
        -V u_\varepsilon - \varepsilon \partial_y u_\varepsilon \vert_{y=0} = 0.\label{eq:no_flux_u}
    \end{equation}

    \begin{lemma}[{\it a priori} estimates on $u_\varepsilon$ and $m_\varepsilon$] \label{prop:gronwall}Let $m_\varepsilon, u_\varepsilon$ be solutions of \eqref{eq:main_plugged_in}-\eqref{eq:no_flux_u}.  Then there exists a constant $C_T$ depending only on $T$ such that 
        \begin{equation}
\int\limits_{0}^{\infty}\int\limits_{0}^{2\pi} u_\varepsilon^2\, \text{d}\varphi\mathrm{d}y+\int\limits_0^{2\pi} \frac{m_\varepsilon^2}{4V}\,\mathrm{d}\varphi+\varepsilon\int\limits_0^{T}\int\limits_{0}^{\infty}\int\limits_{0}^{2\pi} (\partial_y u_\varepsilon)^2 \,\mathrm{d}\varphi\mathrm{d}y\mathrm{d}t<C_T.
    \label{eq:gronwall_no_diffusion}
\end{equation}
    \end{lemma}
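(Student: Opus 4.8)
The plan is to propagate the combined quantity
\[
\mathcal{E}_\varepsilon(t):=\tfrac12\int_0^{2\pi}\!\!\int_0^\infty u_\varepsilon^2\,\mathrm{d}y\,\mathrm{d}\varphi+\int_0^{2\pi}\frac{m_\varepsilon^2}{4V}\,\mathrm{d}\varphi
\]
and to establish a Gr\"onwall inequality $\frac{\mathrm{d}}{\mathrm{d}t}\mathcal{E}_\varepsilon+c\,\varepsilon\int_0^{2\pi}\!\!\int_0^\infty(\partial_y u_\varepsilon)^2\le C\,\mathcal{E}_\varepsilon$ with $c,C$ independent of $\varepsilon$; integrating in time then gives \eqref{eq:gronwall_no_diffusion}, since $\mathcal{E}_\varepsilon(0)$ is bounded uniformly in $\varepsilon$ (the two summands in \eqref{eq:forthogonal} are the pieces of an $L^2_y$-orthogonal decomposition of $f_\varepsilon$, so at $t=0$ one has $\mathcal{E}_\varepsilon\le C\|f_\varepsilon\|^2_{L^2((0,2\pi)\times(0,\infty))}$). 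The first step is a closed evolution equation for $m_\varepsilon$: multiplying the equation in \eqref{eq:main_fp-noD} by $2Ve^{-Vy/\varepsilon}$, integrating in $y$, and using the no-flux condition \eqref{eq:no_flux_u} together with \eqref{eq:representation}, the $O(\varepsilon^{-1})$ terms cancel and one is left with
\[
\partial_t m_\varepsilon+\partial_\varphi(\Phi m_\varepsilon)-\frac{\Phi V'}{2V}\,m_\varepsilon+\frac{2V\Phi V'}{\varepsilon}\int_0^\infty y\,u_\varepsilon e^{-Vy/\varepsilon}\,\mathrm{d}y+2V^2\,u_\varepsilon|_{y=0}=0.
\]

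Next I test \eqref{eq:main_plugged_in} with $u_\varepsilon$ over $(0,2\pi)\times(0,\infty)$, test the $m_\varepsilon$-equation above with $m_\varepsilon/(2V)$ over $(0,2\pi)$, and add. By the orthogonality $\int_0^\infty u_\varepsilon e^{-Vy/\varepsilon}\,\mathrm{d}y=0$ the term $\varepsilon^{-1}\partial_t m_\varepsilon e^{-Vy/\varepsilon}$ contributes nothing and the term $\varepsilon^{-1}\partial_\varphi(\Phi m_\varepsilon e^{-Vy/\varepsilon})$ collapses to $-\varepsilon^{-2}\int\!\!\int\Phi V'\,m_\varepsilon\,y\,e^{-Vy/\varepsilon}u_\varepsilon$. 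Integrating by parts in $\varphi$ (periodicity) and in $y$ (decay as $y\to\infty$) and invoking the Robin condition at $y=0$ produces the dissipation $\varepsilon\int\!\!\int(\partial_y u_\varepsilon)^2$ on the left, together with, on the right: terms controlled by $C(\int\!\!\int u_\varepsilon^2+\int m_\varepsilon^2)$ (from the $\Phi'$ and $V'$ factors, using $V\ge g$), a coupling term $\int_0^{2\pi}V m_\varepsilon u_\varepsilon|_{y=0}$, two weighted coupling terms $\varepsilon^{-2}\int\!\!\int\Phi V'm_\varepsilon\,y\,e^{-Vy/\varepsilon}u_\varepsilon$ and $\varepsilon^{-1}\int\!\!\int\Phi V'm_\varepsilon\,y\,e^{-Vy/\varepsilon}u_\varepsilon$, and the wrong-signed inflow term $+\tfrac12\int_0^{2\pi}V\,u_\varepsilon^2|_{y=0}\,\mathrm{d}\varphi$ produced by the transport $-V\partial_y u_\varepsilon$.

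The core of the argument is that none of these last four terms is dominated by $\mathcal{E}_\varepsilon$ and the first three are singular as $\varepsilon\to0$, yet the orthogonality constraint forces $u_\varepsilon$ to be small near the wall. Writing $u_\varepsilon(y)=u_\varepsilon(0)+\int_0^y\partial_s u_\varepsilon\,\mathrm{d}s$ inside $\int_0^\infty u_\varepsilon e^{-Vy/\varepsilon}\,\mathrm{d}y=0$ and applying Cauchy--Schwarz yields the quantitative trace bound $u_\varepsilon(t,\varphi,0)^2\le\dfrac{\pi\varepsilon}{4V(\varphi)}\int_0^\infty(\partial_y u_\varepsilon)^2\,\mathrm{d}y$, and likewise $\int_0^\infty u_\varepsilon^2 e^{-Vy/\varepsilon}\,\mathrm{d}y\le C\dfrac{\varepsilon^2}{V^2}\int_0^\infty(\partial_y u_\varepsilon)^2\,\mathrm{d}y$ (equivalently a Poincar\'e inequality for the probability measure $\tfrac{V}{\varepsilon}e^{-Vy/\varepsilon}\,\mathrm{d}y$, whose mean is $\varepsilon/V$). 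Feeding these in, $\tfrac12\int_0^{2\pi}V\,u_\varepsilon^2|_{y=0}\le\tfrac{\pi}{8}\,\varepsilon\int\!\!\int(\partial_y u_\varepsilon)^2$, and after centering $y$ about $\varepsilon/V$ (so that only the variance enters) each coupling term is at most $\delta\,\varepsilon\int\!\!\int(\partial_y u_\varepsilon)^2+C_\delta(\int\!\!\int u_\varepsilon^2+\int m_\varepsilon^2)$. Since $\tfrac{\pi}{8}<1$, choosing $\delta$ small keeps a positive multiple of the dissipation on the left; the remainder is $\le C\,\mathcal{E}_\varepsilon$ (using $\int m_\varepsilon^2\le 4\|V\|_{L^\infty}\int m_\varepsilon^2/(4V)$), and Gr\"onwall together with the bounded initial energy concludes. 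I expect the main obstacle to be precisely this absorption step: the inflow boundary term and the $\varepsilon^{-1}$- and $\varepsilon^{-2}$-weighted coupling terms are genuinely singular under crude estimates and can only be tamed by exploiting the orthogonality built into the ansatz \eqref{eq:forthogonal} to quantify the smallness of $u_\varepsilon$ and of its boundary trace, trading these terms against the weak $O(\varepsilon)$ dissipation that is available.
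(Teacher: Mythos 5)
Your proposal is correct and follows essentially the same route as the paper: the same combined energy $\int u_\varepsilon^2+\int m_\varepsilon^2/(4V)$, the same testing of \eqref{eq:main_plugged_in} by $u_\varepsilon$ and by $m_\varepsilon e^{-Vy/\varepsilon}$ (equivalently, your closed $m_\varepsilon$-equation tested by $m_\varepsilon/(2V)$), and the same decisive use of the orthogonality in \eqref{eq:forthogonal} to kill the $O(\varepsilon^{-1})$ cross terms, to convert $\int_0^\infty y\,u_\varepsilon e^{-Vy/\varepsilon}\,\mathrm{d}y$ into $\frac{\varepsilon}{V}\int_0^\infty y\,\partial_y u_\varepsilon\, e^{-Vy/\varepsilon}\,\mathrm{d}y$, and to obtain the trace bound $u_\varepsilon(t,\varphi,0)^2\le\frac{\varepsilon}{2V}\int_0^\infty(\partial_y u_\varepsilon)^2\,\mathrm{d}y$ (the sharp Cauchy--Schwarz constant, rather than your $\pi\varepsilon/(4V)$, though either suffices for the absorption) so that the singular boundary and coupling terms are traded against the $O(\varepsilon)$ dissipation before applying Gr\"onwall. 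The only cosmetic differences are that you assemble the closed $m_\varepsilon$-equation first and explicitly retain the $\varepsilon^{-1}$-weighted contribution of $\partial_\varphi(\Phi u_\varepsilon)$ tested against $m_\varepsilon e^{-Vy/\varepsilon}$, which the paper's displayed identity \eqref{energy_m_eps} omits but which is harmless by exactly the bound you describe.
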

\begin{proof}

\noindent\underline{STEP 1.} {\it Estimate for $u_\varepsilon$.}

\noindent Multiplying \eqref{eq:main_plugged_in} by $u_\varepsilon$ and integrating in $\int_0^{\infty}\int_0^{2\pi} \cdot \, \text{d}\varphi\text{d}y$:
        \begin{equation}      
        \begin{aligned}
            \dfrac{1}{2} \frac{\text{d}}{\text{d}t} \left[\int\limits_{0}^{\infty}\int\limits_{0}^{2\pi} u_\varepsilon^2\, \text{d}\varphi\text{d}y\right] &= \mathop{\underbrace{-\int\limits_{0}^{\infty}\int\limits_{0}^{2\pi} \partial_\varphi(\Phi u_\varepsilon) u_\varepsilon\, \text{d}\varphi\text{d}y}}_{=\text{I}_1} \\
            &+\mathop{\underbrace{\int\limits_{0}^{\infty}\int\limits_{0}^{2\pi} \partial_y\left((V+\varepsilon \partial_y) u_\varepsilon \right) u_\varepsilon \, \text{d}\varphi\text{d}y}}_{\text{I}_2}\\
            &\mathop{\underbrace{- \dfrac{1}{\varepsilon}\int\limits_{0}^{\infty}\int\limits_{0}^{2\pi} \partial_\varphi \left(\Phi m_\varepsilon e^{-Vy/\varepsilon}\right)u_\varepsilon\, \text{d}\varphi\text{d}y}}_{\mathop{\text{I}_3}}.
        \end{aligned}  \label{eq:initial_identity_for_u}
    \end{equation}

We estimate each of the integrals:
\begin{eqnarray}
\text{I}_1&=&\int\limits_{0}^{\infty}\int\limits_{0}^{2\pi} \Phi u_\varepsilon \partial_\varphi u_\varepsilon\, \text{d}\varphi\text{d}y\nonumber \\
    &=&\dfrac{1}{2}\int\limits_{0}^{\infty}\int\limits_{0}^{2\pi} \Phi \partial_\varphi(u_\varepsilon^2)\, \text{d}\varphi\text{d}y\nonumber \\
    &=& -\dfrac{1}{2}\int\limits_{0}^{\infty}\int\limits_{0}^{2\pi}\Phi^\prime \,u_\varepsilon^2 \,\text{d}\varphi \text{d}y. \label{eq:estimate_for_I_1}
\end{eqnarray}

\begin{eqnarray*}
\text{I}_2&=&\int\limits_{0}^{\infty}\int\limits_{0}^{2\pi} \partial_y\left((V+\varepsilon \partial_y) u_\varepsilon \right) u_\varepsilon \, \text{d}\varphi\text{d}y\nonumber\\
    &=&-\int\limits_{0}^{\infty}\int\limits_{0}^{2\pi}Vu_\varepsilon\partial_y u_\varepsilon \, \text{d}\varphi\text{d}y - \varepsilon \int\limits_{0}^{\infty}\int\limits_{0}^{2\pi}(\partial_y u_\varepsilon)^2 \,\text{d}\varphi\text{d}y\nonumber\\
    &=&-\dfrac{1}{2}\int\limits_0^{2\pi} V u_\varepsilon^2|_{y=0}\,\text{d}\varphi  - \varepsilon \int\limits_{0}^{\infty}\int\limits_{0}^{2\pi}(\partial_y u_\varepsilon)^2 \,\text{d}\varphi\text{d}y\nonumber\\
    &=&-\dfrac{1}{2}\int\limits_0^{2\pi} V \left(\left.u_\varepsilon e^{-Vy/\varepsilon}\right|_{y=0}\right)^2\,\text{d}\varphi  - \varepsilon \int\limits_{0}^{\infty}\int\limits_{0}^{2\pi}(\partial_y u_\varepsilon)^2 \,\text{d}\varphi\text{d}y\nonumber\\
     &=&-\dfrac{1}{2}\int\limits_0^{2\pi} V \left(\int\limits_0^{\infty}e^{-Vy/\varepsilon}\partial_yu_\varepsilon\,\text{d}y\right)^2\,\text{d}\varphi  - \varepsilon \int\limits_{0}^{\infty}\int\limits_{0}^{2\pi}(\partial_y u_\varepsilon)^2 \,\text{d}\varphi\text{d}y
\end{eqnarray*}

We then apply Cauchy-Schwarz to the first integral in $y$ and obtain
    \begin{equation}
     \text{I}_2\leq-\dfrac{3\varepsilon}{4}\int\limits_{0}^{\infty}\int\limits_{0}^{2\pi}(\partial_y u_\varepsilon)^2 \,\text{d}\varphi\text{d}y.\label{eq:estimate_for_I_2}
    \end{equation}

\begin{eqnarray}
    \text{I}_3 &=& -\dfrac{1}{\varepsilon}\int\limits_0^\infty \int\limits_0^{2\pi} \Phi^\prime m_\varepsilon e^{-Vy/\varepsilon} u_\varepsilon \,\text{d}\varphi\text{d}y -\dfrac{1}{\varepsilon}\int\limits_0^\infty \int\limits_0^{2\pi} \Phi \partial_\varphi m_\varepsilon e^{-Vy/\varepsilon} u_\varepsilon \,\text{d}\varphi\text{d}y \nonumber\\
    &&\hspace{20pt}+\, \dfrac{1}{\varepsilon^2}\int\limits_0^\infty \int\limits_0^{2\pi} \Phi V^\prime  m_\varepsilon e^{-Vy/\varepsilon} y u_\varepsilon \,\text{d}\varphi\text{d}y
\end{eqnarray}

Note that by orthogonality condition \eqref{eq:forthogonal}, the first two terms are zero.  Then to estimate the remaining term, since $\Phi, V,$ and $m_\varepsilon$ do not depend on $y$, the integral can be rewritten as
\begin{eqnarray}
     \text{I}_3 &=&\dfrac{1}{\varepsilon^2}\left(\int\limits_0^{2\pi} \Phi V^\prime  m_\varepsilon\,\text{d}\varphi\right)\left( \int\limits_0^\infty e^{-Vy/\varepsilon} y u_\varepsilon \,\text{d}y\right)
\end{eqnarray}

To estimate the integral in $y$, note
\begin{eqnarray}
&&\int\limits_{0}^{\infty}ye^{-Vy/\varepsilon}u_\varepsilon\,\text{d}y=\dfrac{\varepsilon}{V}\int\limits_{0}^{\infty}ye^{-Vy/\varepsilon}\partial_y u_\varepsilon\,\text{d}y\nonumber\\
&&\hspace{20pt}\leq\dfrac{\varepsilon}{V}\left(\int\limits_0^{\infty}y^2e^{-2Vy/\varepsilon}\,\text{d}y\right)^{1/2}\left(\int\limits_0^{\infty}(\partial_yu_\varepsilon)^2\,\text{d}y\right)^{1/2}\nonumber\\
&&\hspace{20pt}=\dfrac{\varepsilon^{5/2}}{2V^{5/2}}\left(\int\limits_0^{\infty}(\partial_yu_\varepsilon)^2\,\text{d}y\right)^{1/2}
\end{eqnarray}      
Thus we have 
\begin{eqnarray}
\text{I}_3&\leq& \varepsilon^{1/2}\int\limits_0^{2\pi}\dfrac{m_\varepsilon}{V^{5/2}} \left(\int\limits_0^{\infty}(\partial_yu_\varepsilon)^2\,\text{d}y\right)^{1/2} \,\text{d}\varphi\nonumber 
\\&\leq&\dfrac{\varepsilon}{4}\int\limits_0^{2\pi}\int\limits_0^{\infty}(\partial_yu_\varepsilon)^2\,\text{d}y\text{d}\varphi+
C\int\limits_0^{2\pi}\dfrac{m_\varepsilon^2}{4V}\,\text{d}\varphi. \label{eq:estimate_for_I_3}
\end{eqnarray}

Combining \eqref{eq:initial_identity_for_u},\eqref{eq:estimate_for_I_1}, \eqref{eq:estimate_for_I_2}, and \eqref{eq:estimate_for_I_3} we get
\begin{equation}
       \frac{\text{d}}{\text{d} t} \left[\int\limits_{0}^{\infty}\int\limits_{0}^{2\pi} u_\varepsilon^2\, \text{d}\varphi\text{d}y\right] +\varepsilon \int\limits_{0}^{\infty}\int\limits_{0}^{2\pi} (\partial_y u_\varepsilon)^2 \,\text{d}\varphi\text{d}y\leq 2\int\limits_{0}^{\infty}\int\limits_{0}^{2\pi} u_\varepsilon^2\, \text{d}\varphi\text{d}y + C \int\limits_0^{2\pi}\dfrac{m_\varepsilon^2}{4V}\,\text{d}\varphi.\label{eq:main_est_for_u}
 \end{equation}

\newpage

\noindent\underline{STEP 2.} {\it Estimate for $m_\varepsilon$.}

\noindent Multiply \eqref{eq:main_plugged_in} by $m_\eps e^{-Vy/\varepsilon}$, integrate $\int_0^\infty\int_0^{2\pi} \cdot \, \text{d}y\text{d}\varphi$ and use $\int_0^{\infty}e^{-2Vy/\eps}\,\text{d}y=\frac{\eps}{2V}$:
 	\begin{eqnarray}
            &&\frac{\text{d}}{\text{d}t} \left[\int\limits_0^{2\pi} \dfrac{m_\eps^2}{4V}   \,\text{d}\varphi\right] + \dfrac{1}{\varepsilon}\int\limits_0^{2\pi}\int\limits_0^{\infty} \partial_\varphi\left(\Phi m_\varepsilon e^{-Vy/\varepsilon}\right)m_\varepsilon e^{-Vy/\varepsilon}\,\text{d}y\text{d}\varphi\nonumber \\
            && \hspace{80pt}+ \int\limits_0^{\infty}\int\limits_{0}^{2\pi} \partial_y \left((-V-\eps\partial_y) u_\eps \right) m_\eps e^{-Vy/\eps}\,\text{d}\varphi\text{d}y = 0. \label{energy_m_eps}
       \end{eqnarray}
    We rewrite the second term above as follows: 
   \begin{eqnarray*}
   &&\dfrac{1}{\varepsilon}\int\limits_0^{2\pi}\int\limits_0^{\infty} \partial_\varphi\left(\Phi m_\varepsilon e^{-Vy/\varepsilon}\right)m_\varepsilon e^{-Vy/\varepsilon}\,\text{d}y\text{d}\varphi \\
   &&\hspace{80pt}=\int\limits_0^{2\pi}\dfrac{\Phi'm_\varepsilon^2}{2V}\,\text{d}\varphi+\dfrac{1}{2}\int\limits_0^{\infty}\int\limits_0^{2\pi}\Phi \partial_\varphi\left[m_\varepsilon^2 e^{-2Vy/\varepsilon}\right]\,\text{d}\varphi\text{d}y\\
    &&\hspace{80pt}=\int\limits_0^{2\pi}\dfrac{\Phi'm_\varepsilon^2}{4V}\,\text{d}\varphi.
   \end{eqnarray*}
    We put the third term in \eqref{energy_m_eps} into the right-hand side and estimate as follows using integration by parts and orthogonality condition from \eqref{eq:forthogonal}:
    \begin{eqnarray*}
    && \int\limits_0^{\infty}\int\limits_{0}^{2\pi} \partial_y \left((V+\eps\partial_y) u_\eps \right) m_\eps e^{-Vy/\eps}\,\text{d}\varphi\text{d}y \\
    && \hspace{80pt} = \int\limits_0^{\infty}\int\limits_{0}^{2\pi}V \partial_y u_\eps \, m_\eps e^{-Vy/\eps} \,\text{d}\varphi\text{d}y\\
    && \hspace{80pt} \leq \int\limits_0^{2\pi}Vm_\eps\left(\varepsilon\int\limits_0^{\infty}(\partial_y u_\eps)^2\,\text{d}y\right)^{1/2}\left(\dfrac{1}{\varepsilon}\int\limits_0^{\infty}e^{-2Vy/\eps}\,\text{d}y\right)^{1/2}\,\text{d}\varphi \\
    && \hspace{80pt} \leq 2\|V\|^2_{L^{\infty}}\int\limits_0^{2\pi}\dfrac{m_\eps^2}{4V}\,\text{d}\varphi+\dfrac{\eps}{4}\int\limits_0^{2\pi}\int\limits_0^{\infty}(\partial_y u_\eps)^2\,\text{d}y\text{d}\varphi.
    \end{eqnarray*}
    
    \noindent Now we can rewrite \eqref{energy_m_eps} as follows: 
    \begin{equation}
     \begin{aligned}
     \frac{\text{d}}{\text{d}t} \left[\int\limits_0^{2\pi} \frac{m_\eps^2}{4V}\,\text{d}\varphi \right]&\leq C\int\limits_0^{2\pi}\dfrac{m_\eps^2}{4V}\,\text{d}\varphi+\dfrac{\eps}{4}\int\limits_0^{2\pi}\int\limits_0^{\infty}(\partial_y u_\eps)^2\,\text{d}y\text{d}\varphi.
    \end{aligned}\label{final-estimate-m}
    \end{equation}
    Here, $C:=\|\Phi'\|_{L^{\infty}}+2\|V\|_{L^{\infty}}>0$.

    \medskip 

    \newpage

    \noindent\underline{STEP 3.} {\it Concluding the proof of Lemma~\ref{prop:gronwall}}
    
    Adding estimates \eqref{final-estimate-m} and \eqref{eq:main_est_for_u} we get
    \begin{eqnarray*}
      &&\frac{\text{d}}{\text{d}t} \left[\int\limits_{0}^{\infty}\int\limits_{0}^{2\pi} u_\eps^2\, \text{d}\varphi\text{d}y+\int\limits_0^{2\pi} \frac{m_\eps^2}{4V}\,\text{d}\varphi\right]+\dfrac{3\eps}{4} \int\limits_{0}^{\infty}\int\limits_{0}^{2\pi} (\partial_y u_\eps)^2 \,\text{d}\varphi\text{d}y\\
      &&\hspace{120pt}\leq C\left(\int\limits_{0}^{\infty}\int\limits_{0}^{2\pi} u_\eps^2\, \text{d}\varphi\text{d}y+\int\limits_0^{2\pi} \frac{m_\eps^2}{4V}\,\text{d}\varphi\right).
    \end{eqnarray*} 
    Applying Gronwall estimate gives \eqref{eq:gronwall_no_diffusion}, and it concludes the proof of Lemma~\ref{prop:gronwall}.
 \end{proof}

    \begin{lemma}[passing to the limit $\varepsilon \to 0$]\label{prop:weak_convergence}
        Let $m_\varepsilon, u_\varepsilon$ be solutions of \eqref{eq:main_plugged_in}.  Then
        \begin{align}
            m_\varepsilon &\rightharpoonup m \text{ in $L^2([0,2\pi))$}\\
            u_\varepsilon &\rightharpoonup u \text{ in $H^1((0,\infty)) \times L^2([0,2\pi)$.}
        \end{align}

        where $m,u$ satisfy the limiting equations:
        \begin{equation}
\begin{cases}
    \partial_t u  + \partial_\varphi(\Phi u) - V\partial_y u = 0,\\
    \partial_t\left(\dfrac{m}{V}\right) + \partial_\varphi \left(\dfrac{\Phi m}{V}\right) = -V\cdot u \vert_{y=0}.
\end{cases}
    \label{eq:limit_no_diffusion}
\end{equation}
    \end{lemma}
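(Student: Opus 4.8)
The plan is to extract weak limits of $m_\varepsilon$ and $u_\varepsilon$ from the \textit{a priori} bounds of Lemma~\ref{prop:gronwall}, and then to recover the two equations of \eqref{eq:limit_no_diffusion} by passing to the limit in the weak formulation of \eqref{eq:main_plugged_in} tested against two complementary families of test functions. By Lemma~\ref{prop:gronwall}, $\{u_\varepsilon\}$ is bounded in $L^\infty(0,T;X)$, $\{m_\varepsilon\}$ is bounded in $L^\infty(0,T;L^2_\varphi)$, and $\{\sqrt\varepsilon\,\partial_y u_\varepsilon\}$ is bounded in $L^2\bigl((0,T)\times\R_+\times I\bigr)$; hence, along a subsequence, $u_\varepsilon\rightharpoonup u$ and $m_\varepsilon\rightharpoonup m$ weakly-$*$. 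Integrating by parts in $y$ and using the orthogonality constraint in \eqref{eq:forthogonal} gives $u_\varepsilon|_{y=0}=-\int_0^\infty e^{-Vy/\varepsilon}\,\partial_y u_\varepsilon\,\mathrm{d}y$, whence $\|u_\varepsilon|_{y=0}\|_{L^2((0,T)\times I)}^2\le \tfrac1{2g}\,\varepsilon\,\|\partial_y u_\varepsilon\|_{L^2}^2\le C_T$ by Cauchy--Schwarz. Since every term of \eqref{eq:main_plugged_in} is linear, weak convergence of $u_\varepsilon$ and $m_\varepsilon$ is enough to pass to the limit — no Aubin--Lions-type compactness is needed. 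In particular, the singular product $\tfrac1\varepsilon m_\varepsilon e^{-Vy/\varepsilon}$ tested against a smooth $\psi$ equals $m_\varepsilon\cdot\bigl(\tfrac1\varepsilon\int_0^\infty e^{-Vy/\varepsilon}\psi\,\mathrm{d}y\bigr)$, a weakly convergent sequence times a sequence converging \emph{strongly} (indeed uniformly) to $\psi(t,\varphi,0)/V$.

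To obtain the bulk equation, I would test \eqref{eq:main_plugged_in} against $\psi\in C_c^\infty\bigl((0,T)\times I\times[0,\infty)\bigr)$ with $\psi|_{y=0}=0$ and integrate by parts in all variables. Because $\psi$ vanishes at $y=0$, the boundary-layer terms are $O(\varepsilon)$ (using $\tfrac1\varepsilon\int_0^\infty e^{-Vy/\varepsilon}\psi\,\mathrm{d}y=O(\varepsilon)$), the term $-\varepsilon\partial_y^2 u_\varepsilon$ contributes $\varepsilon\int\partial_y u_\varepsilon\,\partial_y\psi=O(\sqrt\varepsilon)$, and every boundary term at $y=0$ drops. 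Passing to the limit yields $\partial_t u+\partial_\varphi(\Phi u)-V\partial_y u=0$ in $\mathcal D'\bigl((0,T)\times I\times(0,\infty)\bigr)$, the first equation of \eqref{eq:limit_no_diffusion}. Since $V\ge g>0$, this is a transport equation whose characteristics $\tfrac{\mathrm{d}y}{\mathrm{d}t}=-V$ exit through $\{y=0\}$, so $\{y=0\}$ is non-characteristic (an outflow boundary) and $u$ has a well-defined trace $u|_{y=0}\in L^2((0,T)\times I)$.

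To obtain the wall equation, I would test \eqref{eq:main_plugged_in} against $\theta(t,\varphi)$ alone (using decay of $u_\varepsilon$ as $y\to\infty$) and integrate by parts. The boundary-layer terms pass, via $\tfrac1\varepsilon\int_0^\infty e^{-Vy/\varepsilon}\,\mathrm{d}y=1/V$, to the transport operator acting on $m/V$; the boundary terms $\pm V u_\varepsilon|_{y=0}\,\theta$ coming from $-V\partial_y u_\varepsilon$ and from $-\varepsilon\partial_y^2 u_\varepsilon$ cancel once the no-flux condition \eqref{eq:no_flux_u} is used; what is left, after $\varepsilon\to0$, is the conservation law $\partial_t\bigl(\tfrac mV+\int_0^\infty u\,\mathrm{d}y\bigr)+\partial_\varphi\bigl(\Phi(\tfrac mV+\int_0^\infty u\,\mathrm{d}y)\bigr)=0$. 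Integrating the bulk equation over $y\in(0,\infty)$ gives $\partial_t\int_0^\infty u\,\mathrm{d}y+\partial_\varphi\bigl(\Phi\int_0^\infty u\,\mathrm{d}y\bigr)+V u|_{y=0}=0$, and subtracting produces $\partial_t(\tfrac mV)+\partial_\varphi(\tfrac{\Phi m}{V})=V u|_{y=0}$, the coupled wall equation of \eqref{eq:limit_no_diffusion} with $\rho_{\mathrm{wall}}=m/V$, $\rho_{\mathrm{bulk}}=u$ (equivalently, testing against $\theta(t,\varphi)\,\chi(y/\delta)$ with $\chi(0)=1$ and sending $\varepsilon\to0$ then $\delta\to0$ localizes the identity to $\{y=0\}$ and produces $V u|_{y=0}$ directly from $\tfrac1\delta\chi'(y/\delta)$). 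The regularity $u\in H^1_y L^2_\varphi$ then follows by reading $V\partial_y u=\partial_t u+\partial_\varphi(\Phi u)$ together with an $L^2$-bound on $\partial_\varphi u_\varepsilon$, obtained by differentiating \eqref{eq:main_plugged_in} in $\varphi$ and repeating the estimate of Lemma~\ref{prop:gronwall}, where the now-broken orthogonality $\int_0^\infty\partial_\varphi u_\varepsilon e^{-Vy/\varepsilon}\,\mathrm{d}y=\tfrac{V'}\varepsilon\int_0^\infty y\,u_\varepsilon e^{-Vy/\varepsilon}\,\mathrm{d}y$ contributes only Gronwall-absorbable lower-order terms.

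The main obstacle I expect is the boundary coupling term $V u|_{y=0}$. One must keep $u_\varepsilon|_{y=0}$ controlled uniformly in $\varepsilon$ even though only $\sqrt\varepsilon\,\partial_y u_\varepsilon$ is bounded in $L^2$ (handled by the orthogonality identity above) and, more delicately, justify that the coupling term in the limit is genuinely the trace of the limiting bulk solution $u$ — which forces one to use the transport structure of the bulk equation and to commute the limits $\varepsilon\to0$ and $\delta\to0$ with the trace operator. Everything else reduces to integration by parts combined with weak-times-strong convergence.
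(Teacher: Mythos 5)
Your opening moves coincide with the paper's: the \textit{a priori} bounds of Lemma~\ref{prop:gronwall}, the observation that linearity makes weak convergence sufficient, the ``weak times strong'' treatment of $\tfrac{1}{\varepsilon}m_\varepsilon e^{-Vy/\varepsilon}$, and, most importantly, the identity $u_\varepsilon|_{y=0}=-\int_0^\infty e^{-Vy/\varepsilon}\partial_y u_\varepsilon\,\mathrm{d}y$ obtained from the orthogonality constraint in \eqref{eq:forthogonal}, which is exactly how the paper gets the uniform $L^2$ bound on the traces. The interior transport equation for $u$ is also obtained the same way. The divergence — and the gap — is in the wall equation. Your primary route tests \eqref{eq:main_plugged_in} against $\theta(t,\varphi)$, constant in $y$; but $u_\varepsilon$ is controlled only in $L^2_y(0,\infty)$ on an \emph{unbounded} interval, so $\mathbf{1}\notin L^2_y$ is not an admissible test function, $\int_0^\infty u_\varepsilon\,\mathrm{d}y$ need not be finite, and the ``conservation law'' for $\tfrac{m}{V}+\int_0^\infty u\,\mathrm{d}y$ from which you subtract is not defined. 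Your parenthetical fallback, $\theta(t,\varphi)\chi(y/\delta)$ with $\varepsilon\to0$ then $\delta\to0$, repairs admissibility (and the boundary flux terms do cancel exactly by \eqref{eq:no_flux_u}), but then the entire content of the coupling term sits in $\lim_{\delta\to0}\lim_{\varepsilon\to0}\tfrac{1}{\delta}\int_0^\infty Vu_\varepsilon\,\chi'(y/\delta)\,\mathrm{d}y$, and identifying this iterated limit with $-Vu|_{y=0}$ requires the trace of the weak limit $u$ to be attained in an averaged sense near $y=0$. That regularity is not available a priori ($\varepsilon\|\partial_y u_\varepsilon\|_{L^2}^2\le C$ gives no uniform $H^1_y$ bound), and your proposed bootstrap — read $V\partial_y u=\partial_t u+\partial_\varphi(\Phi u)$ off the limiting equation — presupposes $L^2$ control of $\partial_t u$ and $\partial_\varphi u$, i.e., a whole second round of differentiated estimates that you dispatch in one clause. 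You correctly name this as ``the main obstacle,'' but you do not resolve it.

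The paper's way around it is the one idea your proposal is missing: take the $\varepsilon$-\emph{dependent} boundary-layer test function $v=h(t,\varphi)e^{-Vy/\varepsilon}$ in the weak formulation \eqref{eq:weakform}. This test function is integrable, resolves the layer exactly (all $y$-integrals are computed in closed form), and produces a limit identity \eqref{eq:m_limit} in which the weak limit $u_0$ of the traces $u_\varepsilon|_{y=0}$ appears. Comparing \eqref{eq:m_limit} with the limit \eqref{eq:weakformlimit2} obtained from $\varepsilon$-independent test functions then \emph{derives} the algebraic relation \eqref{eq:boundary_limit} between $u_0$, $m$, and $u|_{y=0}$ — notably $u_0\neq u|_{y=0}$; they differ by a factor $\tfrac12$ and an $m$-dependent correction — after which $u_0$ is eliminated to close the system \eqref{eq:limit_no_diffusion}. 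No trace theorem for the limit, no interchange of $\varepsilon$ and $\delta$ limits, and no differentiated estimates are needed. I would either adopt that device or supply, in full, the equi-integrability/trace argument your localization requires.
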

    \begin{proof}
    Multiply \eqref{eq:main_plugged_in} by $v \in C^\infty([0,T]\times[0,\infty)\times[0,2\pi))$, $v|_{t=0}=v|_{t=T}=0$ and $v$ is periodic in $\varphi$, integrate over $t,y,\varphi$, and integrate by parts. We obtain the definition of a weak solution:
    \begin{eqnarray}
            &&-\frac{1}{\eps} \int\limits_0^T\int\limits_0^\infty\int\limits_0^{2\pi} m_\eps e^{-Vy/\eps}v_t\,\text{d}\varphi\text{d}y\text{d}t -\frac{1}{\eps} \int\limits_0^T\int\limits_0^\infty\int\limits_0^{2\pi}\Phi m_\eps e^{-Vy/\eps} \partial_\varphi v\,\text{d}\varphi\text{d}y\text{d}t \nonumber\\
            &&\hspace{30pt}-\int\limits_0^T\int\limits_0^\infty\int\limits_0^{2\pi} u_\eps \partial_t v\,\text{d}\varphi\text{d}y\text{d}t  +\int\limits_0^T\int\limits_0^\infty\int\limits_0^{2\pi}u_\eps (V - \eps \partial_y) \partial_yv \,\text{d}\varphi\text{d}y\text{d}t \nonumber\\
            &&\hspace{30pt}- \int\limits_0^T\int\limits_0^\infty\int\limits_0^{2\pi}  \Phi u_\eps \partial_\varphi v\,\text{d}\varphi\text{d}y\text{d}t-\eps\int\limits_0^T \int\limits_0^{2\pi} u_\eps|_{y=0} \cdot \partial_y v|_{y=0}\,\text{d}\varphi\text{d}t =0. 
         \label{eq:weakform}
    \end{eqnarray}
    Set $v(t,y,\varphi) = h(t,\varphi) e^{-Vy/\eps}$.
    \begin{eqnarray}
           &&-\int\limits_0^T\int\limits_0^{2\pi} \dfrac{m_\eps}{2V} h_t\,\text{d}\varphi\text{d}t - \frac{1}{\eps}  \int\limits_0^T\int\limits_0^\infty\int\limits_0^{2\pi} \Phi m_\eps e^{-2Vy/\eps} \partial_\varphi h\,\text{d}\varphi\text{d}y\text{d}t\nonumber\\
            &&\hspace{80pt}- \frac{1}{\eps^2}\int\limits_0^T\int\limits_0^\infty\int\limits_0^{2\pi} V'\Phi m_\eps ye^{-2Vy/\eps}h  \,\text{d}\varphi\text{d}y\text{d}t \nonumber\\
            &&\hspace{80pt}-\dfrac{1}{\eps}\int\limits_0^T\int\limits_0^\infty\int\limits_0^{2\pi}V'\Phi u_\eps y e^{-Vy/\eps}h\,\text{d}\varphi\text{d}y\text{d}t
            \nonumber\\
            &&\hspace{80pt}+ \int\limits_0^T \int\limits_0^{2\pi} Vh \cdot u_\eps |_{y=0}\,\text{d}\varphi\text{d}t = 0.  \label{eq:weakformphi}
    \end{eqnarray}
   Use $\int_0^{\infty}e^{-2Vy/\eps}\,\text{d}y=\eps/(2V)$ and $\int_0^{\infty}ye^{-2Vy/\eps}\,\text{d}y=\eps^2/(4V^2)$ to get 
    \begin{eqnarray}
       && -\int\limits_0^T\int\limits_0^{2\pi} \dfrac{m_\eps}{2V} h_t\,\text{d}\varphi\text{d}t -\int\limits_0^T\int\limits_0^{2\pi} \Phi\dfrac{m_\eps}{2V}  \partial_\varphi h\,\text{d}\varphi\text{d}t\nonumber\\
         &&\hspace{80pt}-\int\limits_0^T\int\limits_0^{2\pi}  \frac{V'}{{(2V)}^2}\Phi m_\eps h\,\text{d}\varphi\text{d}t\nonumber\\
          &&\hspace{80pt}-\dfrac{1}{\eps}\int\limits_0^T\int\limits_0^{2\pi}\int\limits_0^{\infty}V'\Phi u_\eps y e^{-Vy/\eps}h\,\text{d}y\text{d}\varphi\text{d}t
         \nonumber\\
                     &&\hspace{80pt}+ \int\limits_0^T \int\limits_0^{2\pi} Vh \cdot u_\eps |_{y=0}\,\text{d}\varphi\text{d}t = 0. 
         \label{eq:weakformlimit}
    \end{eqnarray}
    From Lemma~\ref{prop:gronwall} we can introduce $m(t,\varphi)$, the weak limit of $m_\eps(t,\varphi)$ as $\eps$ goes to $0$ in $L^2(0,T;L^2(0,2\pi))$, and pass to the limit in first three terms above. Here is what happens with the fourth term:
    \begin{eqnarray}
    &&-\dfrac{1}{\eps}\int\limits_0^T\int\limits_0^{2\pi}\int\limits_0^{\infty}V'\Phi u_\eps y e^{-Vy/\eps}h\,\text{d}y\text{d}\varphi\text{d}t\nonumber\\
    &&\hspace{40pt}\leq \dfrac{2\|h\|_{\infty}}{\eps}\left(\int\limits_0^{T}\int\limits_0^{2\pi}\int\limits_0^{\infty}y^2e^{-2Vy/\eps}\text{d}y\text{d}\varphi\text{d}t\right)^{1/2}\left(\int\limits_0^{T}\int\limits_0^{2\pi}\int\limits_0^{\infty}u_\eps^2\,\text{d}y\text{d}\varphi\text{d}t\right)^{1/2} \nonumber \\
    && \hspace{40 pt}\leq C\|h\|_{\infty}\sqrt{2\pi} T\eps^{1/2}. 
    \end{eqnarray}
    Thus, the fourth term vanishes as $\eps$ goes to $0$. 
    
    To pass to the limit in the last term in \eqref{eq:weakformphi}, observe
    \begin{eqnarray}
           \int\limits_0^T\int\limits_0^{2\pi} {\left(u_\eps|_{y=0}\right)}^2 \,\text{d}\varphi \text{d}t&=&  \int\limits_0^T\int\limits_0^{2\pi} {\left(u_\eps e^{-Vy/\eps}|_{y=0}\right)}^2 \,\text{d}\varphi \text{d}t\nonumber\\
           &=& \int\limits_0^T\int\limits_0^{2\pi}{\left(\int\limits_0^\infty \partial_y u_\eps e^{-Vy/\eps} \, dy\right)}^2 \text{d}\varphi \text{d}t\nonumber\\
           &\leq&\int\limits_0^T\int\limits_0^{2\pi}\left(\int\limits_0^\infty (\partial_y u_\eps)^2\,\text{d}y\right)\left(\int\limits_0^{\infty}e^{-2Vy/\eps} \, \text{d}y \right)\text{d}\varphi \text{d}t\nonumber\\          
            &\leq& \int\limits_0^T\int\limits_0^{2\pi} \dfrac{\eps}{V}\int\limits_0^\infty  {\left(\partial_y u_\eps\right)}^2 \, \text{d}y \text{d}\varphi \text{d}t < C.
      \end{eqnarray}
    From this estimate we conclude that $u_\eps|_{y=0}\rightharpoonup u_0$ in $L^2(0,T;L^2(0,2\pi))$.  Passing to the limit in \eqref{eq:weakformlimit}, we obtain 
    \begin{equation}
        \label{eq:m_limit}
       -\int\limits_{0}^{T}\int\limits_0^{2\pi}\dfrac{m}{V}\left[h_t+\Phi\partial_\varphi h+\dfrac{V'}{2V}\Phi h\right]\,\text{d}\varphi\text{d}t=-2\int\limits_0^{T}\int\limits_0^{2\pi}Vu_0h\,\text{d}\varphi\text{d}t.
    \end{equation}
    
    \bigskip 
    
    Next, consider $v(t,y,\varphi)$, independent of $\eps$, in \eqref{eq:weakform}. We note that  
       \begin{equation}
            \lim_{\eps \to 0}\frac{1}{\eps} \int\limits_0^{T}\int\limits_{0}^{2\pi}\int\limits_0^\infty  e^{-Vy/\eps} m_\eps(t,\varphi) v(t,y,\varphi) \, \text{d}y\text{d}\varphi\text{d}t = \int\limits_0^{T}\int\limits_{0}^{2\pi}\frac{m(t,\varphi)}{V}v(t,0,\varphi)\text{d}\varphi\text{d}t.\nonumber 
        \end{equation}

    We apply this equality in \eqref{eq:weakform} with \eqref{eq:gronwall_no_diffusion} to pass to the limit in \eqref{eq:weakform}. We also use \eqref{eq:gronwall_no_diffusion} which implies weak convergence $u_\eps \rightharpoonup u$ in $L^2((0,T)\times(0,2\pi)\times (0,\infty))$  as well as established above convergence  $u_\eps|_{y=0}\rightharpoonup u_0$ in $L^2(0,T;L^2(0,2\pi))$ (note that we don't impose $u|_{y=0}=u_0$). We obtain
    \begin{eqnarray}
       &&  -\int\limits_0^T\int\limits_0^{2\pi}\frac{m}{V} \left[ v_t(t,0,\varphi) + \Phi \partial_\varphi v(t,0,\varphi)\right]\text{d}\varphi\text{d}t \nonumber\\
       && \hspace{80pt}-\int\limits_0^T\int\limits_0^{2\pi} \int\limits_0^\infty u \left[v_t-V\partial_y v+\Phi\partial_\varphi v\right]\, \text{d}y\text{d}\varphi\text{d}t=0.
        \label{eq:weakformlimit2}
    \end{eqnarray}
    Comparing this with \eqref{eq:m_limit} with $h(t,\varphi)=v(t,0,\varphi)$ we get
\begin{eqnarray}
       &&-2\int\limits_0^T \int\limits_0^{2\pi} Vu_0h\,\text{d}\varphi\text{d}t +\int\limits_0^T \int\limits_0^{2\pi} \frac{m}{2V^2} V'\Phi h\,\text{d}\varphi\text{d}t \nonumber \\
           && \hspace{80pt}-\int\limits_0^T\int\limits_0^{2\pi} \int\limits_0^\infty u \left[v_t-V\partial_y v+\Phi\partial_\varphi v\right]\, \text{d}y\text{d}\varphi\text{d}t=0.\nonumber
\end{eqnarray}
    Integrating by parts, we conclude that equation above is the weak formulation for
    \begin{align}
        &u_t + \partial_\varphi (\Phi u)-V \partial_y u   = 0,\quad y>0,0\leq\varphi <2\pi \nonumber\\
        &\label{eq:boundary_limit}-2Vu_0 + \frac{m}{2V^2}\Phi V' + u|_{y=0}V  = 0,\quad 0\leq\varphi< 2\pi. 
    \end{align}
    
    \noindent Similarly, integrating by parts \eqref{eq:m_limit} we get: 
    \begin{equation}
    \label{eq:m_limit2}\frac{1}{V}\partial_t m + \frac{m}{2{V}^2}\Phi V' + \partial_\varphi \left(\frac{\Phi}{V} m \right) = -2Vu_0,\quad 0\leq\varphi< 2\pi.
    \end{equation}
    
    \noindent Plugging \eqref{eq:boundary_limit} into \eqref{eq:m_limit2}, we get the system \eqref{eq:limit_no_diffusion}.

    \end{proof}

\section*{Acknowledgment}
The work of M.P. was supported by the Hellman Fellowship Program 2024-2025. The work of L.B. and S.D. was partially supported by the National
Science Foundation grants DMS-2404546, DMS-2005262, and PHY-2140010.
P.-E. Jabin was partially supported by NSF grants DMS-2205694 and DMS-2508570.

\bibliography{refs}

\end{document}